%
\documentclass{article}
\usepackage{amsfonts}
\usepackage{amsxtra}
\usepackage{amsthm}
\usepackage{amssymb}
\usepackage{verbatim}

\newtheorem{fed}{\textbf{Definition}}[section]
\newtheorem{thm}[fed]{\textbf{Theorem}}
\newtheorem{lemma}[fed]{\textbf{Lemma}}

\newtheorem{ex}[fed]{\textbf{Example}}
\newtheorem{rem}[fed]{\textbf{Remark}}
\newtheorem{prop}[fed]{\textbf{Proposition}}

\newtheorem{cor}[fed]{\textbf{Corollary}}
\usepackage{amscd,amssymb,bbm,graphicx,epsfig,psfrag,epic,eepic,latexsym,amsmath}
\usepackage{amsmath}
\usepackage[arrow,matrix,curve]{xy}
\usepackage{mathrsfs}
\usepackage{graphicx}
\usepackage{hyperref}
\usepackage{color}
\newcommand{\A}{\mathcal{A}}
\newcommand{\N}{\mathbb{N}}

\newcommand{\Q}{\mathbb{Q}}
\newcommand{\Z}{\mathbb{Z}}
\newcommand{\R}{\mathbb{R}}
\newcommand{\C}{\mathbb{C}}

%
\parindent=0pt
\parskip=4pt

\newcommand{\om}{\omega}
\newcommand{\Om}{\Omega}
\newcommand{\EBK}{\text{EBK}}
\newcommand{\p}{\partial}
\newcommand{\LL}{\mathcal{L}}
\newcommand{\la}{\langle}
\newcommand{\ra}{\rangle}
\newcommand{\into}{\hookrightarrow}

\begin{document}
\title{A variational characterization of Einstein--Brillouin--Keller quantization}
\author{Kai Cieliebak and Urs Frauenfelder}
\maketitle

\section{Introduction}

In 1917, Albert Einstein presented an intrinsic formulation of the
Bohr--Sommerfeld quantization conditions for arbitrary integrable systems~\cite{Einstein}.
In 1926, L\'eon Brillouin derived Einstein's conditions from the recently
found Schr\"odinger equation~\cite{Brillouin}.
In 1958, Joseph Keller rediscovered Einstein's conditions, enhanced
by a Maslov correction term involving half-integer quantum numbers~\cite{Keller,Keller-Rubinow}.
These enhanced conditions became known as {\em
Einstein--Brillouin--Keller (EBK) quantization}.

EBK quantization expresses the spectrum by quantizing the actions of
all invariant tori (see~\S\ref{ss:EBK}) and is defined only for integrable systems. On the
other hand, the Gutzwiller trace formula~\cite{gutzwiller} expresses
the spectrum in terms of periodic orbits, which also makes sense for
some non-integrable systems. This raises the question of how to
describe, in the integrable case, the EBK spectrum in terms of
periodic orbits. This question is addressed
e.g.~in~\cite{Balian-Bloch,Berry-Tabor} via complex periodic orbits.

In this paper we address this question via an object from symplectic
geometry, the {\em marked action spectrum} of a hypersurface $S$ in an
exact symplectic manifold (see~\S\ref{ss:toric}).  
This encodes the actions of periodic orbits on $S$ together with their
free homotopy classes. An old question in symplectic geometry asks 
to which extent a hypersurface is determined by its marked action spectrum,
generalizing the classical question to which extent a Riemannian
manifold is determined by its marked length spectrum.
In~\cite{Cieliebak-diss} a positive answer is given if $S$ is a 
level set of an integrable Hamiltonian system. Using this, we deduce
expressions of the EBK spectrum in terms of the marked action spectrum.
We will ignore the Maslov corrections for most of this paper and
indicate the required adjustments in~\S\ref{sec:Maslov}. 

To state the results, consider a {\em toric Hamiltonian}, i.e.~a
Hamiltonian $H:\C^n\to\R$ of the form
$$
   H(z) = f\Bigl(\frac12|z_1|^2,\dots,\frac12|z_n|^2\Bigr)
$$
for a smooth function $f:\R_+^n\to\R_+$. Assume in addition that $f$ is
homogeneous of some degree $d>0$, so that it is uniquely determined by
its regular level set $N=f^{-1}(1)\subset\R_+^n$. Our first result is 

\begin{thm}\label{thm:action-EBK}
In the setup above, assume in addition that the points with
nonvanishing Gauss curvature are dense in $N$. Then the EBK spectrum
of $H$ can be constructed from the marked action spectrum of the
hypersurface $H^{-1}(1)$. 
\end{thm}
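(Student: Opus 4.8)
The plan is to reconstruct the EBK spectrum through a chain of three passages: from the marked action spectrum of $H^{-1}(1)$ to the geometry of the hypersurface $N\subset\R_+^n$, then to the function $f$, and finally to the quantized actions of the invariant tori. Let me think about what each step requires and where the difficulty lies.

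First I would analyze the structure of periodic orbits on the hypersurface $H^{-1}(1)$. Since $H$ is toric, the torus $T^n$ acts on $\C^n$ by rotating the phases of the coordinates $z_j$, and $H$ is invariant under this action; the moment map is $\mu(z)=\bigl(\frac12|z_1|^2,\dots,\frac12|z_n|^2\bigr)$. The Hamiltonian flow of $H$ is therefore the flow generated by the vector field whose angular frequencies are the partial derivatives $\p_j f$ evaluated at $\mu(z)$. A trajectory lying over a point $a=\mu(z)\in N$ is periodic precisely when the frequency vector $\bigl(\p_1 f(a),\dots,\p_n f(a)\bigr)$ is a rational multiple of an integer vector $k\in\Z^n$, and in that case its free homotopy class in $H^{-1}(1)\setminus(\text{coordinate hyperplanes})$ records the integer vector $k$ while its action is the quantity $\int p\,dq$ around the orbit, which I expect to come out proportional to $\la k,a\ra$ up to the homogeneity factor. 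Thus the marked action spectrum packages, for each rational frequency direction, the value of the linear functional $\la k,\cdot\ra$ at the corresponding point of $N$.

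Next I would invert this data to recover $N$. By the density of points of nonvanishing Gauss curvature, the set of points $a\in N$ whose outward normal direction (which is parallel to $\nabla f(a)$, i.e.\ to the frequency vector) is rational forms a dense subset, and at each such point the marked action spectrum supplies both the rational normal direction $k$ and the support value $\la k,a\ra$. This is exactly the data of the supporting hyperplanes of the convex-type hypersurface $N$ along a dense set of rational normals; by continuity and the nondegenerate curvature assumption, these supporting hyperplanes determine $N$ itself as their envelope. Having recovered $N$, homogeneity of degree $d$ determines $f$ uniquely, since $N=f^{-1}(1)$ pins down $f$ on every ray. Once $f$ is known, the EBK conditions—quantizing the actions $\frac{1}{2\pi}\oint_{\gamma_j}p\,dq = a_j$ of the basic cycles of each Liouville torus $\mu^{-1}(a)$ to (half-)integer values and reading off the energy $f(a)$—can be evaluated directly, producing the EBK spectrum.

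The main obstacle I anticipate is the inversion step: showing rigorously that the countable dense collection of rational supporting hyperplanes, extracted from the marked action spectrum, actually reconstructs $N$ and not merely its convex hull or some weaker envelope. Here the nonvanishing Gauss curvature hypothesis is essential—it guarantees that the Gauss map from $N$ to the sphere of normal directions is a local diffeomorphism, so that rational normals, being dense in the sphere, pull back to a dense set of points of $N$ at which the tangent hyperplane is determined; strict convexity of level sets (which should follow from positivity of curvature together with the toric/homogeneous structure) then lets me recover each point of $N$ as the unique contact point of its supporting hyperplane. I would also need to handle the degenerate orbits lying in the coordinate subspaces, where some $z_j=0$ and the free homotopy class degenerates, treating these as limits of the generic classes. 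The cleanest formulation is to invoke the reconstruction result of \cite{Cieliebak-diss} as a black box for the passage from marked action spectrum to the level set, and to spend the argument's effort on translating its output into the EBK quantization conditions.
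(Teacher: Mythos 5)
Your overall skeleton matches the paper's: read off from the periodic orbits the rational normal directions $k$ and the pairings $\la p,k\ra$, reconstruct $N$ from this data, recover $f$ by $d$-homogeneity, and read off the EBK spectrum from the quantized tori (the paper does this via Proposition~\ref{prop:action-EBK}, Corollary~\ref{cor:action-EBK}, and the (Scaling) property to pass from $1$-homogeneous to $d$-homogeneous). However, the justification you give for the crucial inversion step is wrong in the generality of the theorem. You propose to recover $N$ as the envelope of its supporting hyperplanes with rational normals, and you assert that ``strict convexity of level sets\dots should follow from positivity of curvature together with the toric/homogeneous structure.'' Neither holds: the hypothesis of Theorem~\ref{thm:action-EBK} is only that points of \emph{nonvanishing} Gauss curvature are dense in $N$ --- no sign condition and no convexity is assumed, and none follows. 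Indeed the paper's central example, the disk billiard of \S\ref{sec:disk-toric}, is a \emph{concave} toric domain, and Theorem~\ref{thm:convex-toric} treats convex and concave as genuinely distinct cases. For a non-convex $N$ the tangent hyperplane at a point with rational normal is not a supporting hyperplane, and the envelope/convex-hull construction reconstructs the wrong set (at best a convexification of $N$), so your argument as written fails exactly where you concentrated the effort.

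What the paper uses instead is the pointwise Legendre map $L(p)=n(p)/\la p,n(p)\ra$ of Lemma~\ref{lem:Kai} (from~\cite{Cieliebak-diss}): under the dense-nonvanishing-curvature hypothesis, $L$ is an involutive embedding on the dense set $N'$ of \emph{nice} points (where $K(p)\neq 0$, $\la p,n(p)\ra\neq 0$, and $L^{-1}(L(p))=\{p\}$), and $N=\overline{L(L(N'))}$ --- no convexity needed. The marked action spectrum produces exactly the set $M=\overline{\{k/a\}}=\overline{L(N')}$, since rational points are dense in $N'$, and applying $L$ once more to the nice points of $M$ recovers $N$; this is Proposition~\ref{prop:action-EBK}. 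Note also the conditions your envelope picture cannot see: one must discard points with $\la p,n(p)\ra=0$ (which exist for non-star-shaped $N$) and points where $L$ fails to be injective. Your closing suggestion --- to invoke the reconstruction result of~\cite{Cieliebak-diss} as a black box and spend the effort on translating marked actions into EBK data --- is precisely the paper's route and would repair the proof, but it must \emph{replace} your supporting-hyperplane argument, not merely formalize it, since that argument is not a sketch of the cited result but of a different (and here inapplicable) convex-duality statement.
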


The proof of this theorem gives explicit formulas for reconstructing
$N$ from the marked action spectrum (Proposition~\ref{prop:action-EBK}),
and for constructing the EBK spectrum from $N$ (Corollary~\ref{cor:action-EBK}).  

If $N$ is strictly convex or concave we get more explicit expressions.

\begin{thm}\label{thm:convex-toric}
In the setup above, assume in addition that $N$ is strictly convex.
Let $\A\subset\Z^n\times\R_+$ be the marked action spectrum of
$H^{-1}(1)$. Then the EBK spectrum of $H$ consists of the values
\begin{equation*}
  E_m = \Bigl(\sup_{(k,a)\in\A}\frac{\hbar\la m,k\ra}{a}\Bigr)^d,\qquad m\in\N_0^n.
\end{equation*}
For $N$ strictly concave an analogous formula holds with $\inf$
instead of $\sup$. 
\end{thm}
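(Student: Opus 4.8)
The plan is to first make the EBK spectrum explicit in action--angle coordinates, and then to recognize the proposed formula as a statement of convex (Legendre) duality between $N$ and the marked action spectrum. Introduce the action variables $I_j=\tfrac12|z_j|^2$ and angles $\theta_j=\arg z_j$, in which $\om=\sum_j dI_j\wedge d\theta_j$ and $H=f(I)$. Ignoring the Maslov corrections, the EBK conditions quantize the actions as $I_j=\hbar m_j$, $m\in\N_0^n$, so the EBK spectrum is exactly $\{f(\hbar m):m\in\N_0^n\}$. Writing $g:=f^{1/d}$, which is positively homogeneous of degree $1$ with $\{g=1\}=N$, homogeneity gives $f(\hbar m)=(\hbar\,g(m))^d$. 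Thus the theorem reduces to the identity
\[
  g(m)=\sup_{(k,a)\in\A}\frac{\la m,k\ra}{a},\qquad m\in\N_0^n,
\]
in the normalization where the action of a periodic orbit is $a=\tfrac1{2\pi}\oint\lambda$ with $\lambda=\sum_j I_j\,d\theta_j$ (any other normalization rescales $a$ by a constant that is reabsorbed into $\hbar$).

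Next I would compute the marked action spectrum. Since $H$ depends only on $I$, each torus $\{I\}\times\mathbb{T}^n$ with $I\in N$ is invariant, and its flow is linear with frequency vector $\nabla f(I)$. A trajectory closes up precisely when $\nabla f(I)$ is parallel to a primitive integer vector $k$, which is then the free homotopy class of the orbit; its iterates contribute $(jk,ja)$ and do not affect the supremum. Over one period each $\theta_j$ increases by $2\pi k_j$, so $a=\tfrac1{2\pi}\oint\lambda=\la I,k\ra$. Euler's relation for the degree-$d$ function $f$ gives $\la I,\nabla f(I)\ra=d\,f(I)=d$ on $N$, whence for the unique $I=I(k)\in N$ with $\nabla f(I)\parallel k$ we have $\nabla f(I)=\tfrac{d}{\la I,k\ra}k$; combined with $\nabla g=\tfrac1d\nabla f$ on $N$ this yields the clean identity
\[
  \frac{k}{a}=\frac{k}{\la I,k\ra}=\nabla g\bigl(I(k)\bigr),
\]
so that $\tfrac{\la m,k\ra}{a}=\la m,\nabla g(I(k))\ra$ for every $(k,a)\in\A$.

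Finally I would invoke convex duality. Let $B=\{x\in\R_+^n:g(x)\le1\}$, which is convex because $N=\partial B$ is strictly convex; then $g$ is the Minkowski gauge of $B$, so $g(m)=\sup_{p\in B^\circ}\la m,p\ra$ is the support function of the polar body $B^\circ$. A one-line computation using Euler's relation and the subgradient inequality shows $\nabla g(x)\in\partial B^\circ$ for every $x\in N$, and by strict convexity the Gauss map $x\mapsto\nabla g(x)$ is a homeomorphism from $N$ onto (the relevant part of) $\partial B^\circ$. Since integer directions are dense, the realized normals $\{k:\nabla f(I)\parallel k\}$ are dense among the outward normals of $N$, so the points $\nabla g(I(k))$ are dense in $\partial B^\circ$; continuity of $p\mapsto\la m,p\ra$ then gives
\[
  \sup_{(k,a)\in\A}\frac{\la m,k\ra}{a}=\sup_{p\in\partial B^\circ}\la m,p\ra=g(m),
\]
and hence $E_m=(\hbar g(m))^d=f(\hbar m)$, as claimed. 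The strictly concave case is identical after replacing $B$ by the convex region $\{g\ge1\}$, convex duality by its concave analogue, and $\sup$ by $\inf$.

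The step I expect to be the main obstacle is the density/surjectivity claim in the last paragraph: $N$ is non-compact and asymptotic to the coordinate hyperplanes, so one must check that the outward normal directions realized by periodic orbits are genuinely dense among those normals that compute $\sup_{p\in\partial B^\circ}\la m,p\ra$ for each $m\in\N_0^n$ (equivalently, that the Gauss map of $N$ covers the relevant part of the sphere). Controlling this asymptotic behavior, and making sure no contribution to the supremum escapes to the ends of $N$, is the delicate point; strict convexity is exactly what makes the Gauss map injective with open image, reducing the issue to the density of integer directions.
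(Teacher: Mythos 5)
Your argument is correct and is essentially the paper's own proof in different clothing: the reduction to the $1$-homogeneous case via $g=f^{1/d}$ and the (Scaling) property, the identity $k/a=\nabla g(I(k))$ (which is exactly the paper's hypersurface Legendre transform $L(p)=n(p)/\la p,n(p)\ra$ of \S\ref{ss:hyp}), and polar duality plus density of rational normal directions, which is the content of Lemma~\ref{lem:Kai-convex} and Proposition~\ref{prop:action-EBK} imported from~\cite{Cieliebak-diss}. Your Minkowski-gauge/support-function formulation is a self-contained substitute for those cited lemmas, which is a modest gain in the strictly convex case, but the mechanism is identical.

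Concerning the obstacle you flag at the end: it is less delicate than you fear, and you already have the tools to close it. First, for the inequality ``$\sup$ over rational normals $\geq g(m)$'' no compactness is needed at all: the supremum of a continuous function over a dense subset of a set equals the supremum over that set, so density near the maximizing points suffices. Second, Euler's relation locates the maximizer explicitly: for $m\in\N_0^n$ with all components positive, $p_0:=m/g(m)\in N$ and $\la m,\nabla g(p_0)\ra=g(m)\,\la p_0,\nabla g(p_0)\ra=g(m)$, so the supremum over $\nabla g(N)$ is \emph{attained} at the interior point $p_0$, and one only needs rational directions densely inside the Gauss image near this one point --- which strict convexity gives, since there the Gauss map is a local diffeomorphism. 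If some components of $m$ vanish, $p_0$ lies on $\p N$ (where $N$ meets the coordinate walls) and is a limit of interior points, so the same continuity argument applies; in the concave case the extremum may then genuinely fail to be attained in $\A$ (for the disk billiard with $m_1=0$ the infimum is only approached as $k_1/k_2\to\infty$), but the theorem only asserts the value of the $\inf$/$\sup$, not its attainment, so nothing is lost. This is precisely how the paper's proof of Proposition~\ref{prop:convex-toric} handles the point, via the density of rational points in $N'$ asserted in the proof of Proposition~\ref{prop:action-EBK}.
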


An interesting limiting case of this theorem is a system of $n$
uncoupled harmonic oscillators described by a linear function
$f(p)=\om_1p_1+\cdots+\om_np_n$. We explain in Example~\ref{ex:harm-cont}
how the correct spectrum can be recovered from
Theorem~\ref{thm:convex-toric} in this case.

{\bf Disk billiard. }
The second half of this paper studies in more detail a particular
integrable system, the billiard on the round disk $D_R\subset\R^2$ of
radius $R$. 
A direct computation (see~\cite{Weibert} or~\S\ref{sec:billiard}) shows that its EBK
spectrum consists of the values
\begin{equation*}
  E_{m,n} = \frac{\hbar^2F_{m,n}^2}{2R^2},\qquad m\in\Z,\ n\in\N_0,
\end{equation*}
where $F=F_{m,n}$ is the unique solution of the equation
\begin{equation*}
  \sqrt{F^2-m^2} - m\arccos(m/F) = \pi n.
\end{equation*}
On the other hand, V.\,Ramos has shown that the disk billiard can be
described by concave toric Hamiltonian and above~\cite{Ramos}. 
We verify in~\S\ref{sec:disk-toric} that applying
Theorem~\ref{thm:convex-toric} to this Hamiltonian recovers the energies
$E_{m,n}$ above. 

Our final result concerns the number theoretic properties of the EBK
spectrum of the disk billiard, or equivalently of the numbers $F_{m,n}$.

\begin{thm}\label{thm:schanuel}
All the numbers $F_{m,n}$ are transcendental. Moreover, if Schanuel's
conjecture (see~\S\ref{sec:Schanuel}) is true, then for each $n\in\N$ the set
$\{F_{m,n}\mid m \in \mathbb{N}_0\}$ is algebraically independent.
\end{thm}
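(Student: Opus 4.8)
The plan is to analyze the defining equation for $F_{m,n}$, which I'll rewrite using the substitution that reveals its transcendental structure. Setting $m/F = \cos\theta$ for some $\theta\in[0,\pi)$, the equation $\sqrt{F^2-m^2}-m\arccos(m/F)=\pi n$ becomes $F\sin\theta - m\theta = \pi n$, and combined with $m = F\cos\theta$ this gives $F(\sin\theta - \theta\cos\theta) = \pi n$. The key observation is that each $F_{m,n}$ is therefore a transcendental-looking combination of $\pi$ and a root of a transcendental equation. First I would establish transcendence of each individual $F_{m,n}$: I expect this to follow from the Lindemann--Weierstrass theorem applied to the relation between $F$, $m$, and the trigonometric/inverse-trigonometric quantities, using that $\arccos$ of an algebraic number is (apart from trivial cases) a transcendental multiple of algebraic quantities, and that the appearance of $\pi n$ forces a contradiction if $F$ were algebraic.

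For the second, conditional part I would invoke Schanuel's conjecture directly. Schanuel's conjecture states that for $\Q$-linearly independent complex numbers $x_1,\dots,x_k$, the transcendence degree of $\Q(x_1,\dots,x_k,e^{x_1},\dots,e^{x_k})$ over $\Q$ is at least $k$. The strategy is to express each $F_{m,n}$ (for fixed $n$ and varying $m$) in terms of exponentials of suitable arguments, then choose a collection of numbers whose $\Q$-linear independence can be checked directly, and apply Schanuel to conclude that the resulting field has large transcendence degree, forcing algebraic independence of the $F_{m,n}$. Concretely, I would pass to the complex exponential form of the defining equation: writing the constraint via $e^{i\theta}$, each $F_{m,n}$ together with the associated angle $\theta_{m,n}$ satisfies an equation involving $e^{i\theta_{m,n}}$ and $\pi$, and I would assemble the numbers $i\theta_{m,n}$ (and possibly $2\pi i$) into the list fed to Schanuel.

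The main obstacle, and the step requiring the most care, will be verifying the $\Q$-linear independence hypothesis needed to apply Schanuel's conjecture. The angles $\theta_{m,n}$ are themselves transcendental and defined only implicitly, so proving that the chosen logarithmic quantities are linearly independent over $\Q$ cannot rely on explicit values; instead I would argue by contradiction, assuming a nontrivial $\Q$-linear relation and deriving from it an algebraic relation among the $F_{m,n}$ that contradicts either the defining equations or a dimension count. A subtlety is that $n$ is fixed while $m$ ranges over $\N_0$, so I must ensure the linear independence holds within each fixed-$n$ family; here the strict monotonicity of the map $m\mapsto F_{m,n}$ (which follows from the implicit function theorem applied to the defining equation, since the left-hand side is monotone in $F$) should guarantee that distinct $m$ give genuinely independent data.

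Finally I would reconcile the two parts: the unconditional transcendence of each $F_{m,n}$ should emerge as the $k=1$ (or small-$k$) case of the same exponential analysis, where Lindemann--Weierstrass suffices and Schanuel is not needed, while the full algebraic independence statement is precisely the general-$k$ consequence of Schanuel applied to the family. I expect the bookkeeping of the Maslov-type half-integer shift (implicit in the quantization condition producing $\pi n$) to require attention to ensure the constant $\pi n$ on the right-hand side interacts correctly with the transcendence argument, but this should not affect the essential structure of the proof.
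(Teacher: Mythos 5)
Your first part is, modulo phrasing, the paper's own proof of Proposition~\ref{prop:transcendental}: if $F_{m,n}$ were algebraic (for $m\geq 1$), then $e^{i\theta}$ with $\theta=\arccos(m/F_{m,n})$ would be algebraic, being equal to $m/F_{m,n}+i\sqrt{1-m^2/F_{m,n}^2}$; since $\theta=\bigl(\sqrt{F_{m,n}^2-m^2}-n\pi\bigr)/m$ and $e^{in\pi/m}$ is algebraic ($n$ being rational), this would make $e^{i\sqrt{F_{m,n}^2-m^2}/m}$ algebraic with algebraic nonzero exponent, contradicting Lindemann--Weierstrass; together with $F_{0,n}=n\pi$ this settles the unconditional statement. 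This part of your plan is sound and matches the paper.

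The genuine gap is in the second part, at exactly the step you flag as the main obstacle: the $\mathbb{Q}$-linear independence of the exponents fed to Schanuel. Neither of your proposed mechanisms can close it. Strict monotonicity of $m\mapsto F_{m,n}$ gives only distinctness of real numbers, which is irrelevant to $\mathbb{Q}$-linear independence (the numbers $1<2<3$ are distinct and linearly dependent); and ``deriving an algebraic relation among the $F_{m,n}$'' is not by itself a contradiction, because to contradict anything you must already know the $F_{m,n}$ are algebraically independent --- which is the statement being proved, so as written your argument is circular. The paper breaks this circle with an induction on $N$ that is absent from your proposal. The induction hypothesis is that $F_0,\dots,F_{N-1}$ are algebraically independent (supplied by Schanuel at the previous stage, base case $F_0=n\pi$). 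A putative rational relation $G_N=\sum_{m<N}r_mG_m$, where $G_m=\sqrt{F_m^2/m^2-1}$ equals your $\theta_m$ plus $n\pi/m$, is then exponentiated via $e^{iG_m-in\pi/m}=m/F_m+i\sqrt{1-m^2/F_m^2}$; the crucial effect of the assumed relation is that it \emph{eliminates} $F_N$: the quantity $2N/F_N=2\cos(G_N-n\pi/N)$ and its exponential expansion both become algebraic expressions in $F_0,\dots,F_{N-1}$ alone, producing a nontrivial algebraic relation among those $N$ numbers only, which contradicts the induction hypothesis (not the defining equations, and not a free-standing dimension count). Once linear independence of $\{iG_0,\dots,iG_N\}$ is known, Schanuel plus Lemma~\ref{lem:trdeg} (each $e^{iG_m}$ is algebraic over $\mathbb{Q}(F_0,\dots,F_N)$) gives $\mathrm{trdeg}\,\mathbb{Q}(iG_0,\dots,iG_N)\geq N+1$, hence algebraic independence of $F_0,\dots,F_N$, closing the induction. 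Without this induction-plus-elimination structure your plan stalls precisely where you predicted it would. (Two minor further points: the $m=0$ term must be treated separately, and since $\theta_{0,n}=\pi/2$ your list cannot contain both $2\pi i$ and $i\theta_{0,n}$; also, rationality of $n$ and of the coefficients $r_m$ is what keeps the factors of the form $e^{in\pi r}$ algebraic in the elimination step.)
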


\section{EBK quantization of toric domains}

\subsection{EBK quantization}\label{ss:EBK}

Let us start by recalling EBK quantization,
ignoring Maslov corrections which will be discussed in~\S\ref{sec:Maslov}. 

Consider an exact symplectic manifold $(V,\om=d\lambda)$ of dimension
$2n$ and an autonomous Hamilton function $H:V\to\R$. It gives rise to
the Hamiltonian vector field $X_H$ defined by $\om(\cdot,X_H)=dH$, the
Poisson bracket $\{F,G\}=\om(X_F,X_G)$, and
the Hamiltonian system $\dot x=X_H(x)$. We assume that this system is
{\em integrable}. This means that there exist $n$ integrals of motions
$P_1,\dots,P_n$ such that $\{P_i,P_j\}=0$ for all $i,j$ and the
differentials $dP_1,\dots,dP_n$ are linearly independent on an open
dense set $U\subset V$. We assume further that the function
$$
  P=(P_1,\dots,P_n):V\to\R^n
$$
is proper (i.e.~preimages of compact sets
are compact). By the Arnold--Liouville theorem~\cite{Arnold}, $U$ is
then fibered by invariant Lagrangian $n$-tori on which the Hamiltonian
flow is linear. Moreover, the Hamiltonian $H$ has the form
$$
  H = f(P_1,\dots,P_n) = f\circ P
$$
for a function $f:P(V)\to\R$. 
We call an invariant torus $L\subset V$ {\em quantized} if  
\begin{equation}\label{eq:quant}
  \int_\gamma\lambda\in 2\pi\hbar\Z\quad \text{for each loop $\gamma$
    on $L$,}
\end{equation}
where as usual $\hbar$ denotes Planck's constant divided by $2\pi$. 
Since $\lambda|_L$ is closed, is suffices to check this condition for
$n$ loops forming a $\Z$-basis of $H_1(L)$. 
The EBK approximation to the energy spectrum of the quantized system,
or simply the {\em EBK spectrum}, is then defined as
\begin{align*}
  \sigma_\EBK(H)
  &:= \{E\in\R\mid H^{-1}(E) \text{ contains a quantized invariant torus}\} \cr
  &= \{f(p)\mid \text{ the invariant torus $L_p=\{P=p\}$ is quantized}\}.
\end{align*}
The EBK spectrum has the following obvious properties:
\begin{description}
\item[(Scaling)]
For a smooth function $\phi:\R\to\R$,
$$
  \sigma_\EBK(\phi\circ H) = \phi\bigl(\sigma_\EBK(H)\bigr).
$$
\item[(Invariance)]
For a diffeomorphism $\psi:V\to V$ such that $\psi^*\lambda-\lambda$
is exact,
$$
  \sigma_\EBK(H\circ\psi) = \sigma_\EBK(H).
$$
\item[(Continuity)]
The map $H=f\circ P\mapsto\sigma_\EBK(H)$ is continuous with respect to the
strong $C^0$-distance on functions $f:P(V)\to\R$ and the Hausdorff distance
on subsets of $\R$. 
\end{description}

\subsection{Toric Hamiltonians}\label{ss:toric}

Now we specialize to our main class of examples. Consider $\C^n$ with
coordinates $z_j=x_j+iy_j$ and the standard Liouville form
$\lambda=\sum_jx_jdy_j$. A {\em toric Hamiltonian} is a Hamiltonian
$H:\C^n\to\R$ of the form
$$
   H(z) = f\Bigl(\frac12|z_1|^2,\dots,\frac12|z_n|^2\Bigr)
$$
for a smooth function $f:\R_+^n\to\R_+$.
The sublevel set $\{H\leq 1\}\subset\C^n$ is known as a {\em toric domain}.
The corresponding Hamiltonian system $\dot z=i\nabla H(z)$ has the solutions
$$
  z_j(t) = z_j(0)e^{i\p_j\hspace{-1pt}f(p)t},
$$
where $\p_jf(p)$ denotes the $j$-th partial derivative of $f$ at the
point $p=(\frac12|z_1|^2,\dots,\frac12|z_n|^2)$.
So the system is integrable with integrals $P_j=\frac12|z_j|^2$. 
Note that $(P_j,\phi_j)$, where $\phi_j$ is the argument of $z_j$, are
action-angle coordinates.  
Each $p=(p_1,\dots,p_n)\in\R_+^n$ defines an invariant torus
$$
   L_p = \Bigl\{z\in\C^n\;\Bigl|\ \frac12|z_j|^2=p_j\text{ for }j=1,\dots,n\Bigr\}.
$$
The loops 
$$
  \gamma_j(t)=(\sqrt{2p_1},\dots,\sqrt{2p_j}e^{it},\dots,\sqrt{2p_n}),\qquad t\in[0,2\pi]
$$
generate $H_1(L_p)$ and have action $\int_{\gamma_j}\lambda = 2\pi p_j$, 
so $L_p$ is quantized iff  
$$
  p_j = \hbar m_j\quad\text{with }m_j\in\N_0,\quad j=1,\dots,n.
$$
Hence, the EBK spectrum is given by
\begin{equation}\label{eq:toric-EBK}
  \sigma_\EBK = \{f(\hbar m)\mid m\in\N_0^n\}.
\end{equation}
Note that a solution $z$ of the Hamiltonian system is $T$-periodic iff
$\p_j\hspace{-1pt}f(p)T=2\pi k_j$ for integers $k_1,\dots,k_n$, 
so the flow on a torus $L_p$ is $T$-periodic iff
$$
  \frac{T}{2\pi}\nabla f(p) = k\in\Z^n. 
$$
The $T$-periodic solutions $z$ on such a {\em rational torus} have action
$$
  \int_z\lambda = 2\pi\la p,k\ra.
$$
We define the {\em marked action spectrum} of the energy hypersurface
$H^{-1}(1)$ by
$$
  \A := \{(k,\la p,k\ra) \mid k\in\Z^n,\,f(p)=1,\,\nabla f(p)\sim k\}.
$$
It records the actions (up to the factor $2\pi$) of orbits on rational
tori together with their homology classes in $H_1(T^n)=\Z^n$. 
Using the unit normal vector $n(p)=\nabla f(p)/|\nabla
f(p)|$, we can write it in terms of the level set $N=f^{-1}(p)$ as  
$$
  \A = \{(k,\la p,k\ra) \mid k\in\Z^n,\,p\in N,\,n(p)\sim k\}.
$$

\begin{ex}[Uncoupled harmonic oscillators]\label{ex:harm}
A system of $n$ uncoupled harmonic oscillators of angular frequencies
$\om_1,\dots,\om_n$ is described by the Hamilton function
$$
  H(z) = \sum_{j=1}^n\frac{\om_j}{2}|z_j|^2.
$$
This is a toric domain as above with the linear function
$f(p)=\om_1p_1+\dots+\om_np_n$, so its EBK energy spectrum is
$$
  \sigma_\EBK = \Bigl\{\sum_{j=1}^n\hbar\om_j m_j \mid m\in\N_0^n\Bigr\}.
$$
This agrees with the quantum mechanical spectrum up to replacing
$m_j$ by $m_j+\frac12$.
Note that for $n\geq 2$ and rationally independent frequencies
$\om_1,\dots,\om_n$ the flow on each $n$-torus $L_p$ is irrational
(i.e.~its orbits are dense), so the
formula for the EBK energy spectrum arises entirely from irrational tori.
The only periodic orbits of energy $E$ in this case are the $n$ orbits
where all but one coordinate is zero.
We will see in Example~\ref{ex:harm-cont} how the EBK spectrum can nonetheless be
derived from the marked action spectrum by an approximation process. 
A direct expression of the EBK spectrum in terms of periodic orbits 
is given in~\cite{Cieliebak-Frauenfelder-rabquant} using Tate Rabinowitz Floer homology. 
\end{ex}

\section{Legendre transformations}\label{sec:Leg}

In this section we describe Legendre-type transformations in three
different settings and discuss their properties and relationships. 
\smallskip

\subsection{Strictly convex functions}\label{ss:convex}

The {\em Legendre transform} of a strictly convex continuous function
$f:\R^n\to\R$ is the function $\LL f:\R^n\to\R$ defined by 
$$
  \LL f(q) := \sup_{p\in\R^n}\Bigl(\la p,q\ra - f(p)\Bigr). 
$$
It is well-known (see e.g.~\cite{Arnold}) that $\LL f$ is again
strictly convex and $\LL(\LL f)=f$. 
If $f$ is of class $C^2$ and the matrix $d^2f(p)$ of second
derivatives is positive definite for each $p$, then
$$
  \LL f(q) = \la p_0,q\ra - f(p_0) = \la(\nabla f)^{-1}(q),q\ra -
  f\bigl((\nabla f)^{-1}(q)\bigr)
$$
for the unique $p_0\in\R^n$ with $\nabla f(p_0)=q$. 
\smallskip

\subsection{Homogeneous convex functions}\label{ss:hom}

Suppose now that $f:\R^n\to\R$ is smooth and homogeneous of degree $1$, 
$$
  f(tp) = tf(p)\quad\text{for all }t>0,
$$
and its level set $N:=f^{-1}(1)$ is regular and strictly convex
(i.e.~all its normal curvatures are positive, where $N$ is cooriented
by $\nabla f$). Then $f$ is only weakly convex and the Legendre
transform from~\S\ref{ss:convex} is not applicable. Instead, we define its {\em
  homogeneous Legendre transform} $Lf:\R^n\to\R$ by
$$
  Lf(q) := \sup_{p\in N}\Bigl(\la p,q\ra - f(p)\Bigr) + 1
  = \sup_{p\in N}\la p,q\ra.
$$
It follows that 
$$
  Lf(q) = \la p_0,q\ra = \la n^{-1}(q/|q|),q\ra
$$
for the unique $p_0\in N$ with $n(p_0)\sim q$, where $\sim$
denotes positive proportionality and $n$ is the Gauss map
$$
  n:N\to S^{n-1},\qquad p\mapsto\frac{\nabla f(p)}{|\nabla f(p)|}.
$$
\smallskip

\subsection{Hypersurfaces}\label{ss:hyp}

Since the homogeneous function $f$ in~\S\ref{ss:hom} and its level set
$N=f^{-1}(1)$ determine each other, one can reformulate the
homogeneous Legendre transform in terms of the hypersurface $N$.
This transformation has been introduced in greater generality
in~\cite{Cieliebak-diss} and we recall here the relevant definitions
and results.

Consider a compact smooth hypersurface $N\subset\R^n$ with Gauss map
$n:N\to S^{n-1}$ associating to $p\in N$ its outward pointing unit
normal vector $n(p)$. The {\em Gauss curvature} at $p\in N$ is given
by $K(p):=\det Dn(p)$. We do not require $N$ to be convex.
For $p\in N$ with $\la p,n(p)\ra\neq 0$ we define
\begin{equation}\label{eq:L}
  L(p) := \frac{n(p)}{\la p,n(p)}.
\end{equation}

\begin{lemma}[\cite{Cieliebak-diss}]\label{lem:Kai}
Assume that the points with nonvanishing Gauss curvature are dense in $N$.
Then the set of {\em nice points} 
$$
  N' := \bigl\{p\in N\mid K(p)\neq 0,\,\la p,n(p)\ra\neq 0,\,L^{-1}(L(p))=\{p\}\bigr\}
$$
is dense in $N$ and the map~\eqref{eq:L} defines an embedding $L:N'\into\R^n$.
Its image $L(N')$ consists again of nice points and $L(L(p))=p$ for each $p\in N'$. 
Thus $L(L(N'))=N'$, so the hypersurface $N$ can be
reconstructed from its {\em Legendre transform} $L(N')$ by
$$
  N = \overline{L(L(N'))}. 
$$
\end{lemma}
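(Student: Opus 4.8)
The plan is to exploit the projective (polar) duality hidden in the definition of $L$. The first step is the algebraic identity
\begin{equation*}
  \la L(p),p\ra=\frac{\la n(p),p\ra}{\la p,n(p)\ra}=1,
\end{equation*}
valid whenever $\la p,n(p)\ra\neq 0$. Geometrically this says that $L(p)$ is the pole, with respect to the unit sphere, of the affine tangent hyperplane to $N$ at $p$: indeed $T_pN=\{x\mid\la x,n(p)\ra=\la p,n(p)\ra\}=\{x\mid\la x,L(p)\ra=1\}$. This identity is the engine that will ultimately force $L(L(p))=p$.

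Next I would compute the differential of $L$ at a point $p$ with $K(p)\neq 0$ and $h(p):=\la p,n(p)\ra\neq 0$. Writing $S:=Dn(p):T_pN\to T_pN$ for the shape operator and noting that $Dh(v)=\la v,n\ra+\la p,S(v)\ra=\la p,S(v)\ra$ for $v\in T_pN$ (because $v\perp n$), differentiation of $L=n/h$ gives
\begin{equation*}
  DL(v)=\frac{1}{h}\,S(v)-\frac{\la p,S(v)\ra}{h^2}\,n.
\end{equation*}
Two consequences are immediate. First, the tangential part $\tfrac{1}{h}S(v)$ shows that $DL(v)=0$ forces $S(v)=0$, hence $v=0$ since $S$ is invertible where $K=\det S\neq 0$; thus $L$ is an immersion on $\{K\neq 0\}$. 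Second, pairing with $p$ and using $\la n,p\ra=h$ yields $\la DL(v),p\ra=0$ for all $v$, so the tangent space $DL(T_pN)=T_{L(p)}L(N')$ is orthogonal to $p$. Hence the unit normal of the image hypersurface at $L(p)$ equals $\varepsilon p/|p|$ for some sign $\varepsilon=\pm1$.

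With the normal of $L(N')$ identified, the involution drops out. Applying the same transform to $q=L(p)$, whose normal is $\varepsilon p/|p|$, gives
\begin{equation*}
  L(L(p))=\frac{\varepsilon p/|p|}{\la L(p),\varepsilon p/|p|\ra}=\frac{p}{\la L(p),p\ra}=p,
\end{equation*}
where the sign $\varepsilon$ and the factor $|p|$ cancel and the first-step identity is used. The same formula shows $\la q,\varepsilon p/|p|\ra=\varepsilon/|p|\neq0$, and differentiating the relation $L\circ L=\mathrm{id}$ shows that $DL$ is an isomorphism at $q$ as well, so $L(N')$ again has nonvanishing Gauss curvature; thus $L(N')$ consists of nice points. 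Moreover $L:N'\to L(N')$ is an injective immersion with smooth inverse $L$, hence an embedding.

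What remains, and what I expect to be the main obstacle, is the density of $N'$. Density of $\{K\neq0\}$ is the standing hypothesis. For $\{h\neq0\}$, note that if $h$ vanished on a nonempty open $U\subset N$ then $p\in T_pN$ there, and differentiating $h\equiv0$ gives $\la S(p),v\ra=0$ for all $v\in T_pN$, whence $S(p)=0$ and $K\equiv0$ on $U$, contradicting the density of $\{K\neq0\}$. The delicate condition is $L^{-1}(L(p))=\{p\}$: since $N$ need not be convex, the immersed image $L(N')$ may have self-intersections, and one must show that the set of $p$ lying on a multiple fibre of $L$ is nowhere dense. I would establish this by a Sard/transversality argument bounding the self-intersection locus of the immersion $L$ away from a negligible set (or simply invoke the proof in~\cite{Cieliebak-diss}, from which the lemma is quoted). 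Granting density of $N'$, applying the involution in both directions gives $L(L(N'))=N'$, and taking closures yields $N=\overline{L(L(N'))}$.
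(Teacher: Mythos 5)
The paper itself offers no proof of this lemma; it is quoted from \cite{Cieliebak-diss}, so your attempt has to stand on its own. The polar--duality part of your argument is correct and essentially complete: the identity $\la L(p),p\ra=1$, the formula $DL(v)=\frac{1}{h}S(v)-\frac{\la p,S(v)\ra}{h^2}\,n$ with its two consequences (immersion where $K\neq0$ and $h\neq0$, and $T_{L(p)}L(N')\perp p$), the sign-independence of $L$ under $n\mapsto -n$ which makes the involution $L(L(p))=p$ come out cleanly, and the argument that $h=\la p,n(p)\ra$ cannot vanish on an open set (else $S(p)=0$ there, killing $K$). These give the involution, the embedding property, and the niceness of the image, modulo one point you gloss over: to speak of $L:N'\into\R^n$ as an embedding of a hypersurface, and of the Gauss map of $L(N')$, you need $N'$ to be open in $N$. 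The injectivity condition $L^{-1}(L(p))=\{p\}$ is not open on its face; openness needs a short compactness argument (a sequence of colliding preimage pairs must accumulate at a preimage of $L(p)$, hence at $p$ itself since $p$ is nice, contradicting local injectivity of the immersion near $p$).

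The genuine gap is the density of $N'$, i.e.\ of the injectivity condition, which you explicitly leave open. This is the heart of the lemma: without it the reconstruction $N=\overline{L(L(N'))}$ is vacuous. Moreover, the tool you name --- Sard/transversality --- is not the right one: $N$ is a fixed hypersurface, not a generic member of a family, and the self-intersection locus of a fixed immersion can perfectly well have nonempty interior (a double cover of an immersed image is an immersion all of whose points are double points), so no genericity theorem by itself rules this out. What rules it out here is the duality you already established: if the non-injectivity set had interior, one could (after shrinking) produce disjoint open sets $U,U'\subset\{K\neq0,\,h\neq0\}$ whose images $L(U)$ and $L(U')$ coincide on a nonempty open piece of an embedded hypersurface; since the transform of that common piece depends only on the point and its tangent hyperplane, applying your involution to it returns simultaneously the preimage in $U$ and the preimage in $U'$, forcing them to be equal --- a contradiction. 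Turning this into density requires a Baire-category argument over a countable basis, plus care with partner points lying in $\{K=0\}$ (where $L$ need not be an immersion, so the ``common embedded piece'' step is not automatic). None of this is in your write-up, and your fallback of invoking the proof in \cite{Cieliebak-diss} is circular, since that is precisely the source of the statement you were asked to prove.
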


\begin{lemma}[\cite{Cieliebak-diss}]\label{lem:Kai-convex}
Let $N\subset\R^n$ be a smooth, compact, strictly convex hypersurface
enclosing the origin. Then $N=N'$ consists of nice points, $L(N)$ is
again a smooth, compact, strictly convex hypersurface
enclosing the origin, and 
$$
  L(N) = \bigl\{q\in\R^n\mid \sup_{p\in N}\la p,q\ra=1\}. 
$$
\end{lemma}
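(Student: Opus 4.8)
The plan is to identify $L(N)$ with the boundary of the polar dual of the convex body bounded by $N$, and then to read off all asserted properties from this identification. Let $K\subset\R^n$ be the compact convex body with $\p K=N$; since $N$ encloses the origin, $0$ lies in the interior of $K$. I introduce the support function
$$
  h(q):=\sup_{p\in K}\la p,q\ra=\sup_{p\in N}\la p,q\ra,
$$
which is convex, continuous, and positively homogeneous of degree one, and which satisfies $h(q)\ge\varepsilon|q|$ for some $\varepsilon>0$ because $0$ is interior to $K$. The right-hand side of the claimed formula is then exactly $\{h=1\}=\p K^\circ$, the boundary of the polar dual $K^\circ=\{q\mid h(q)\le1\}$, which is itself a convex body with $0$ in its interior. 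It therefore suffices to establish (i) $N=N'$, (ii) $L(N)=\{h=1\}$, and (iii) that $\{h=1\}$ is a smooth, compact, strictly convex hypersurface enclosing the origin.

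For (i) I would check the three defining conditions of a nice point in turn. Strict convexity of $N$ means its second fundamental form is positive definite, so $K(p)=\det Dn(p)>0$ everywhere, giving the first condition. Because $0$ lies interior to $K$, the supporting hyperplane at $p$ with outward normal $n(p)$ strictly separates $0$ from the exterior, so $\la p,n(p)\ra>0$ for all $p$, which is the second condition. For the third, I use that for a compact strictly convex hypersurface the Gauss map $n:N\to S^{n-1}$ is a diffeomorphism, in particular injective; if $L(p)=L(p')$ then $n(p)$ and $n(p')$ are positive multiples of each other, hence equal as unit vectors, whence $p=p'$. Thus $L^{-1}(L(p))=\{p\}$ for all $p$ and $N=N'$.

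For (ii) I fix $p\in N$. Since the supporting hyperplane at $p$ has outward normal $n(p)$, the functional $\la\cdot,n(p)\ra$ attains its maximum over $N$ at $p$, i.e.\ $h(n(p))=\la p,n(p)\ra$; homogeneity of $h$ then yields
$$
  h\bigl(L(p)\bigr)=h\!\left(\frac{n(p)}{\la p,n(p)\ra}\right)=\frac{h(n(p))}{\la p,n(p)\ra}=1,
$$
so $L(N)\subseteq\{h=1\}$. Conversely, given $h(q)=1$, compactness of $N$ guarantees that the supremum defining $h(q)$ is attained at some $p_0\in N$; there the supporting hyperplane has outward normal in the direction of $q$, so $n(p_0)=q/|q|$ and $\la p_0,q\ra=h(q)=1$. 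A direct computation gives $L(p_0)=n(p_0)/\la p_0,n(p_0)\ra=(q/|q|)/(\la p_0,q\ra/|q|)=q/\la p_0,q\ra=q$, which proves the reverse inclusion and hence (ii).

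For (iii), the set $\{h=1\}=L(N)$ is compact as the continuous image of the compact set $N$, and it encloses the origin since $K^\circ$ is a convex body containing $0$ in its interior. Smoothness and strict convexity I would obtain from the standard polar duality between smoothness and strict convexity of convex bodies: the polar of a smooth, strictly convex body with everywhere positive Gauss curvature is again such a body. Alternatively, one can argue intrinsically: Lemma~\ref{lem:Kai} applies since $K>0$ is dense, so $L$ is an embedding on $N'=N$ with $L\circ L=\mathrm{id}$, making $L:N\to L(N)$ a diffeomorphism onto a smooth compact hypersurface, after which the curvature of $L(N)$ is related to $K>0$ by differentiating $L$. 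I expect the regularity and, especially, the strict convexity of $L(N)$ to be the main obstacle: the set-theoretic identity $L(N)=\{h=1\}$ is a formal consequence of the support-function identities, but showing that this level set is a smooth hypersurface of positive Gauss curvature is the one place that genuinely exploits the curvature duality rather than just convexity and the enclosure of the origin.
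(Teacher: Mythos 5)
Your proposal is correct, and its core coincides with the paper's own argument, though you prove more in-line than the paper does. For the displayed formula the paper argues exactly as you do in your step (ii): the supremum $\sup_{p\in N}\la p,q\ra$ is attained at the unique $p_0\in N$ with $n(p_0)\sim q$, and then $\la p_0,q\ra=1$ holds iff $q=n(p_0)/\la p_0,n(p_0)\ra\in L(N)$; your support-function packaging via $h(L(p))=1$ and homogeneity is the same computation in different notation. The difference lies in the remaining assertions: the paper simply cites \cite{Cieliebak-diss} for $N=N'$ and for $L(N)$ being a smooth, compact, strictly convex hypersurface enclosing the origin, whereas you prove $N=N'$ from scratch (correctly: positive Gauss curvature from strict convexity, $\la p,n(p)\ra>0$ from the origin being interior, and injectivity of $L$ reduced to injectivity of the Gauss map), and you handle the regularity and strict convexity of $L(N)$ by invoking the classical polar duality of smooth strictly convex bodies, or alternatively Lemma~\ref{lem:Kai} plus a curvature computation you do not carry out. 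That last step is the one place where your argument, like the paper's, rests on a citation rather than a written proof; you flag this honestly, and it is precisely the content the paper outsources to the thesis, so it is not a gap relative to the paper's own standard. What your route buys is self-containedness modulo one standard fact of convex geometry (the polar of a smooth body with everywhere positive curvature and the origin in its interior is again such a body), together with the identification of $L(N)$ with the boundary of the polar body, which makes the compactness and enclosing-the-origin claims immediate; what the paper's route buys is brevity.
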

  
\begin{proof}
Everything except the last assertion is proved in~\cite{Cieliebak-diss}.
The last assertion holds because $\sup_{p\in N}\la p,q\ra = \la p_0,q\ra$
for the unique $p_0\in N$ with $n(p_0)\sim q$, thus $\sup_{p\in N}\la
p,q\ra = \la p_0,q\ra=1$ iff $q=\frac{n(p_0)}{\la p_0,n(p_0)\ra}\in L(N)$. 
\end{proof}

Lemmas~\ref{lem:Kai} and~\ref{lem:Kai-convex} immediately imply

\begin{cor}\label{cor:Kai-convex}
Consider a $1$-homogeneous function $f:\R^n\to\R$ with strictly
convex compact level set $N=f^{-1}(1)$ and its homogeneous Legendre
transform $Lf$ as in~\S\ref{ss:hom}. Then 
$$
  L(N) = (Lf)^{-1}(1), 
$$
$Lf:\R^n\to\R$ is again $1$-homogeneous with strictly convex compact level
sets, and
$$
  L(Lf)=f. 
$$
\end{cor}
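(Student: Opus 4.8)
The plan is to treat the three assertions in turn, reducing each to the two lemmas just proved. First I would establish the identity of level sets $L(N)=(Lf)^{-1}(1)$. This is immediate from the definitions: by \S\ref{ss:hom} we have $Lf(q)=\sup_{p\in N}\la p,q\ra$, so
$$
  (Lf)^{-1}(1)=\bigl\{q\in\R^n\mid \sup_{p\in N}\la p,q\ra=1\bigr\},
$$
which is exactly the description of $L(N)$ furnished by Lemma~\ref{lem:Kai-convex}.

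Next I would verify that $Lf$ falls back into the setup of \S\ref{ss:hom}. Homogeneity of degree $1$ is a one-line computation, pulling the factor $t>0$ out of the supremum: $Lf(tq)=\sup_{p\in N}\la p,tq\ra=t\,Lf(q)$. For the level sets, note that by homogeneity every positive level set $(Lf)^{-1}(c)$, $c>0$, is the rescaling $c\cdot(Lf)^{-1}(1)$ of the unit level set, so it suffices to treat $(Lf)^{-1}(1)=L(N)$. Lemma~\ref{lem:Kai-convex} certifies that $L(N)$ is a smooth, compact, strictly convex hypersurface enclosing the origin; smoothness of $Lf$ away from the origin then follows because strict convexity and smoothness of $N$ make the Gauss map a diffeomorphism, which is precisely the condition under which the support function is smooth.

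Finally I would prove the involutivity $L(Lf)=f$. For strictly convex $N$, Lemma~\ref{lem:Kai-convex} gives $N=N'$, whence the involution in Lemma~\ref{lem:Kai} yields $L(L(N))=N$. Applying the first assertion now with $Lf$ in place of $f$ (legitimate by the previous paragraph) gives $(L(Lf))^{-1}(1)=L(L(N))=N=f^{-1}(1)$. Thus $L(Lf)$ and $f$ are both $1$-homogeneous functions sharing the same unit level set. The step I expect to require the most care is concluding equality from this: since $N$ encloses the origin, each ray from the origin meets $N$ in exactly one point, so the cone over $N$ fills $\R^n\setminus\{0\}$, and a positive $1$-homogeneous function is determined on all of $\R^n$ by its unit level set. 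Hence $L(Lf)=f$.
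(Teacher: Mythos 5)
Your proposal is correct and follows exactly the route the paper intends: the paper's entire proof is the remark that Lemmas~\ref{lem:Kai} and~\ref{lem:Kai-convex} immediately imply the corollary, and your argument is precisely the spelled-out version of that deduction (the last assertion of Lemma~\ref{lem:Kai-convex} gives $L(N)=(Lf)^{-1}(1)$, homogeneity and Lemma~\ref{lem:Kai-convex} handle the level sets, and $N=N'$ together with the involutivity in Lemma~\ref{lem:Kai} plus the fact that a positive $1$-homogeneous function is determined by its unit level set gives $L(Lf)=f$). No gaps; the care you take at the final step, passing from equality of unit level sets to equality of the homogeneous functions, is exactly the right point to make explicit.
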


\begin{rem}
It immediately follows from the proofs that Lemma~\ref{lem:Kai-convex}
and Corollary~\ref{cor:Kai-convex} remain true with ``convex''
replaced by ``concave'' and ``sup'' replaced by ``inf'' in the
definitions of $L(N)$ and $Lf$. 
\end{rem}

\section{Recovering the EBK spectrum from the marked action spectrum}\label{sec:action-EBK}

\subsection{General toric domains}

Consider a hypersurface $N\subset\R^n$. Recall from~\S\ref{ss:toric}
its marked action spectrum
$$
  \A = \{(k,\la p,k\ra) \mid k\in\Z^n,\,p\in N,\,n(p)\sim k\}.
$$

\begin{prop}[\cite{Cieliebak-diss}]\label{prop:action-EBK}
Assume that the points with nonvanishing Gauss curvature are dense in $N$.
Then $N$ can be recovered from its marked action spectrum as the closure of
the Legendre transform
$$
  N = \overline{L(M')},
$$
where $M'$ is the set of nice points in
$$
  M := \overline{\Bigl\{\frac{k}{a}\;\Bigl|\;(k,a)\in\A,\,a\neq 0\Bigr\}}.
$$
\end{prop}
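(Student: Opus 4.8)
The plan is to recognize the rescaled integer vectors $k/a$ as values of the Legendre transform $L$ of \S\ref{ss:hyp} and then to invoke the reconstruction formula of Lemma~\ref{lem:Kai}. First I would record the elementary computation behind this. If $(k,a)\in\A$ with $a\neq 0$, then by definition there is a point $p\in N$ with $n(p)\sim k$, say $k=\mu\,n(p)$ with $\mu>0$, so that $a=\la p,k\ra=\mu\la p,n(p)\ra$; in particular $\la p,n(p)\ra\neq 0$ and
$$
  \frac{k}{a}=\frac{\mu\,n(p)}{\mu\la p,n(p)\ra}=\frac{n(p)}{\la p,n(p)\ra}=L(p).
$$
Conversely, every $p\in N$ with $\la p,n(p)\ra\neq 0$ whose Gauss normal $n(p)$ is positively proportional to some integer vector gives rise in this way to a pair $(k,a)\in\A$ with $k/a=L(p)$. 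Writing $N_{\mathrm{rat}}$ for the set of such points, this shows
$$
  \Bigl\{\tfrac{k}{a}\;\Bigm|\;(k,a)\in\A,\ a\neq 0\Bigr\}=L(N_{\mathrm{rat}}),\qquad\text{hence}\qquad M=\overline{L(N_{\mathrm{rat}})}.
$$

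The heart of the argument is to show that restricting to the rational locus $N_{\mathrm{rat}}$ costs nothing after taking closures, i.e.\ that $M=\overline{L(N')}$, the closure of the Legendre transform of Lemma~\ref{lem:Kai}. I would work on the open set $\Om:=\{p\in N\mid K(p)\neq 0,\ \la p,n(p)\ra\neq 0\}$, on which the Gauss map $n$ is a local diffeomorphism and $L$ is smooth. Since the directions $k/|k|$, $k\in\Z^n$, are dense in $S^{n-1}$ and $n|_\Om$ is an open map, $N_{\mathrm{rat}}\cap\Om=(n|_\Om)^{-1}(\text{rational directions})$ is dense in $\Om$; continuity of $L$ then gives $\overline{L(N_{\mathrm{rat}}\cap\Om)}=\overline{L(\Om)}$. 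By hypothesis the points of nonvanishing Gauss curvature are dense, so $\Om$ is dense in $\{\,p\in N\mid\la p,n(p)\ra\neq 0\,\}$; feeding the remaining (curvature-zero) rational points through the continuous map $L$ then yields $\overline{L(N_{\mathrm{rat}})}=\overline{L(\Om)}$. The same density of $\Om$ and continuity of $L$, together with the density of the nice locus $N'\subset\Om$ from Lemma~\ref{lem:Kai}, give $\overline{L(N')}=\overline{L(\Om)}$ as well. Combining the three identities yields $M=\overline{L(N_{\mathrm{rat}})}=\overline{L(\Om)}=\overline{L(N')}$.

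It remains to deduce $N=\overline{L(M')}$ from $M=\overline{L(N')}$, using that $L$ is an involution on nice points. By Lemma~\ref{lem:Kai} the set $L(N')$ consists of nice points of $M$ and $L(L(p))=p$ for every $p\in N'$; thus $L(N')\subseteq M'$ and
$$
  N=\overline{L(L(N'))}\subseteq\overline{L(M')}.
$$
For the reverse inclusion, let $q\in M'$ be a nice point of $M$. Since $L(N')$ is dense in $M$, I can choose $q_i=L(p_i)\to q$ with $p_i\in N'$. At the nice point $q$ the Gauss map of $M$, and hence $L$, is continuous, so $L(q_i)\to L(q)$; but $L(q_i)=L(L(p_i))=p_i\in N'$ by Lemma~\ref{lem:Kai}, whence $L(q)=\lim_i p_i\in\overline{N'}=N$. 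Therefore $L(M')\subseteq N$ and $\overline{L(M')}\subseteq N$, completing the proof.

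I expect the density step of the second paragraph to be the main obstacle. The computation $k/a=L(p)$ and the final involution step are essentially formal once Lemma~\ref{lem:Kai} is granted; the delicate point is to control what the closure does when the Gauss normal is constrained to the countable set of rational directions, and in particular to verify that points of $N$ with vanishing Gauss curvature create no new points of $M$ and that $M$ is genuinely a smooth hypersurface near its nice points (so that its Gauss map, used in the last paragraph, is defined). Both rest on the local-diffeomorphism property of $n$ on $\{K\neq 0\}$ together with the density hypothesis and the bookkeeping of nice points carried out in~\cite{Cieliebak-diss}.
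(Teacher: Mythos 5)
Your proof is correct and follows essentially the same route as the paper's: the identity $\frac{k}{a}=L(p)$ for rational points, the identification $M=\overline{L(N')}$ via density of rational directions, and the involution property from Lemma~\ref{lem:Kai} to conclude $N=\overline{L(M')}$. The only difference is that you spell out details the paper compresses (the harmlessness of curvature-zero rational points and the two inclusions $L(N')\subseteq M'$, $L(M')\subseteq N$ in place of the paper's bare assertion $M'=L(N')$), which is a faithful elaboration rather than a different argument.
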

  
\begin{proof}
For the reader's convenience let us recall the proof from~\cite{Cieliebak-diss}.
For $p\in N$ with $n(p)\sim k\in\Z^n$ and $a=\la p,k\ra\neq 0$ we have
$\frac{k}{a}=\frac{n(p)}{\la p,n(p)\ra}=L(p)$. Thus
$$
  M = \overline{\{L(p)\mid p\in N'\text{ rational }\}} = \overline{L(N')},
$$
where $p\in N'$ is called rational if $Tn(p)\in\Z^n$ for some $T>0$,
and the last equality follows because the rational points are dense in the
set $N'$ of nice points. 
Let $M'\subset M$ be the set near which $M$ is a nice hypersurface.
Then $M'=L(N')$ and therefore $N = \overline{L(M')}$ by Lemma~\ref{lem:Kai}. 
\end{proof}

Let $f:\R^n\to\R$ be the $1$-homogeneous function with level set $N=f^{-1}(1)$, 
and $\sigma_\EBK$ the EBK spectrum of the associated Hamilton function
$H(z)=f(|z_1|^2/2,\dots,|z_n|^2/2)$. According to~\eqref{eq:toric-EBK}
we have $E\in\sigma_\EBK$ iff $E=f(\hbar m)$ for some $m\in\N_0^n$, or equivalently iff
$$
  \frac{\hbar m}{E} = \frac{\hbar m}{f(\hbar m)}\in N.
$$
Thus Proposition~\ref{prop:action-EBK} implies

\begin{cor}\label{cor:action-EBK}
For $N$ as in Proposition~\ref{prop:action-EBK}, the EBK spectrum of
the associated $1$-homogeneous Hamiltonian can be
recovered from its marked action spectrum by recovering $N$ as described in
Proposition~\ref{prop:action-EBK} and writing
$$
  \sigma_\EBK = \Bigl\{E\in\R\;\Bigl|\; \text{there exists }m\in \N_0^n\text{ such
    that }\frac{\hbar m}{E}\in N\Bigr\}.
$$
\end{cor}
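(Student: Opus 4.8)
The plan is to combine two ingredients that are already in place: the explicit description \eqref{eq:toric-EBK} of the EBK spectrum as $\sigma_\EBK=\{f(\hbar m)\mid m\in\N_0^n\}$, and the reconstruction of the level set $N=f^{-1}(1)$ from the marked action spectrum provided by Proposition~\ref{prop:action-EBK}. The only analytic step needed to pass between the two is the $1$-homogeneity of $f$, which lets me trade the \emph{value} $f(\hbar m)$ for a \emph{membership} condition in $N$.

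First I would record the key equivalence. Fix $E>0$. By $1$-homogeneity, $f(\hbar m/E)=f(\hbar m)/E$, so $E=f(\hbar m)$ holds if and only if $f(\hbar m/E)=1$, that is, $\hbar m/E\in N$. Hence
$$
  E\in\sigma_\EBK \iff \exists\,m\in\N_0^n:\ \frac{\hbar m}{E}\in N,
$$
which is precisely the asserted formula. This is the computation already sketched in the paragraph preceding the statement; my task is mainly to make the rescaling step explicit and to keep track of its domain of validity.

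Next I would invoke Proposition~\ref{prop:action-EBK}: under the standing hypothesis that the points of nonvanishing Gauss curvature are dense in $N$, the hypersurface $N$ is recovered from the marked action spectrum $\A$ as $N=\overline{L(M')}$. Since the membership test $\hbar m/E\in N$ depends on the marked action spectrum only through $N$, substituting this reconstruction into the equivalence above expresses $\sigma_\EBK$ entirely in terms of $\A$. Combining the two displays then completes the argument.

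The one point requiring care — and the only place where an obstacle could hide — is the boundary case $E=0$, corresponding to $m=0$, where $f(0)=0$ by homogeneity: here the quotient $\hbar m/E$ is undefined, so the rescaling identity is valid only for $E\neq 0$, and the value $E=0$ must be treated as an explicit exception (or excluded from the spectrum by the convention that $f$ is positive away from the origin). Away from this degenerate value the homogeneity argument is a genuine identity and there is no real difficulty; the substance of the corollary lies entirely in Proposition~\ref{prop:action-EBK}.
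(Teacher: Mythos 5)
Your proof is correct and takes essentially the same route as the paper: the $1$-homogeneity of $f$ converts the condition $E=f(\hbar m)$ from~\eqref{eq:toric-EBK} into the membership condition $\frac{\hbar m}{E}\in N$, and Proposition~\ref{prop:action-EBK} supplies the reconstruction of $N$ from the marked action spectrum. Your explicit handling of the degenerate value $E=0$ is a minor point of care that the paper leaves implicit.
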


\subsection{Convex and concave toric domains}\label{ss:convex-toric}

In the case that the hypersurface $N\subset\R^n$ is strictly convex or
concave the EBK spectrum can be given by an explicit formula in terms
of the marked action spectrum.
For $\ell\in\N_0$ we denote 
$$
  \N_\ell^n:=\{k\in\Z^n \mid k_j\geq\ell \text{ for all $j$ and there
      exists $p\in N$ with $n(p)\sim k$}\}.
$$
For $k\in\N_\ell^n$ we set
$$
  a(k) := \la p,k\ra \text{ for the unique $p\in N$ with $n(p)\sim k$}. 
$$

\begin{prop}\label{prop:convex-toric}
Let $N\subset\R^n$ be a strictly convex compact hypersurface and
$\A\subset\Z^n\times\R_+$ its marked action spectrum. Then the EBK
spectrum $\sigma_\EBK$ of the associated $1$-homogeneous Hamiltonian
consists of the values
\begin{equation}\label{eq:Em}
  E_m = \sup_{(k,a)\in\A}\frac{\hbar\la m,k\ra}{a}
  = \sup_{k\in\N_0^n}\frac{\hbar\la m,k\ra}{a(k)},\qquad m\in\N_0^n.
\end{equation}
Equivalently, $E_m$ can be characterized as the unique number such that
\begin{equation}\label{eq:Em-minmax}
  \sup_{\ell\in\N}\inf_{k\in\N_\ell^n}\Bigl(Ea(k)-\hbar\la m,k\ra\Bigr)
  = \begin{cases}
    +\infty & \text{for }E>E_m, \cr
    -\infty & \text{for }E<E_m.
  \end{cases}
\end{equation}
For $N$ strictly concave analogous formulas hold with $\sup$ and
$\inf$ exchanged.
\end{prop}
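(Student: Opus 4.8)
The plan is to prove the explicit formula by reducing it, via the homogeneous Legendre transform of \S\ref{ss:hom}, to the double duality $L(Lf)=f$ from Corollary~\ref{cor:Kai-convex}, and then to deduce the min-max characterization from the $1$-homogeneity of the action function $a$. The first and key step is the identification $a(k)=Lf(k)$: by definition $a(k)=\la p,k\ra$ for the unique $p\in N$ with $n(p)\sim k$, while $Lf(k)=\sup_{p\in N}\la p,k\ra=\la p_0,k\ra$ for the unique $p_0\in N$ with $n(p_0)\sim k$, and these are the same point. Writing $g:=Lf$, which is again $1$-homogeneous with strictly convex compact level set $g^{-1}(1)=L(N)$ by Corollary~\ref{cor:Kai-convex}, and using $a(k)=g(k)>0$ together with homogeneity, each quotient becomes $\frac{\hbar\la m,k\ra}{a(k)}=\hbar\la m,\tfrac{k}{g(k)}\ra$ with $\tfrac{k}{g(k)}\in L(N)$. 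Since in the toric setting all normals to $N$ lie in the positive orthant, the admissible directions $k\in\Z^n$ are precisely those in $\N_0^n$, which already identifies the two suprema in~\eqref{eq:Em}.

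Next I would pass from the discrete supremum to the continuous Legendre transform. The points $\{k/g(k)\mid k\in\N_0^n\}$ are exactly the points of $L(N)$ lying in rational radial directions; since $L(N)\subset\R_+^n$ is a strictly convex compact cap and hence a radial graph, these points are dense in $L(N)$. By continuity of the linear functional $\la m,\cdot\ra$ I then obtain $\sup_{k\in\N_0^n}\la m,\tfrac{k}{g(k)}\ra=\sup_{q\in L(N)}\la m,q\ra=Lg(m)=f(m)$, the last equality being $L(Lf)=f$. Hence $E_m=\hbar f(m)=f(\hbar m)$, and since $\sigma_\EBK=\{f(\hbar m)\mid m\in\N_0^n\}$ by~\eqref{eq:toric-EBK}, the values $E_m$ are exactly the EBK spectrum.

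For the min-max characterization~\eqref{eq:Em-minmax} I would write $Ea(k)-\hbar\la m,k\ra=a(k)\bigl(E-\tfrac{\hbar\la m,k\ra}{a(k)}\bigr)$ and exploit that $a$, and hence this expression, is $1$-homogeneous in $k$ with $a(k)>0$. For $E>E_m$ every factor satisfies $E-\tfrac{\hbar\la m,k\ra}{a(k)}\ge E-E_m>0$, and fixing an interior point $p^\ast\in N$ with all coordinates $\ge\delta>0$ gives $a(k)\ge\la p^\ast,k\ra\ge\delta\sum_j k_j\ge\delta n\ell$ on $\N_\ell^n$; thus the inner infimum is $\ge\delta n\ell(E-E_m)\to+\infty$ as $\ell\to\infty$. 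For $E<E_m$ I would use density once more to choose a strictly positive rational direction $k^\ast$ with $\tfrac{\hbar\la m,k^\ast\ra}{a(k^\ast)}>E$; then $tk^\ast\in\N_\ell^n$ for all large integers $t$, and homogeneity yields $Ea(tk^\ast)-\hbar\la m,tk^\ast\ra=t\,a(k^\ast)\bigl(E-\tfrac{\hbar\la m,k^\ast\ra}{a(k^\ast)}\bigr)\to-\infty$, so each inner infimum equals $-\infty$. The concave case is identical with suprema and infima interchanged.

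The main obstacle I anticipate is the passage from the discrete lattice supremum over $\N_0^n$ to the continuous supremum over $L(N)$: this is precisely where the EBK quantization lattice must be matched against the dual hypersurface, and it rests on strict convexity (so that $L(N)$ is a genuine radial graph whose maximizing point is approximable by rational directions) together with continuity. The same density-plus-homogeneity mechanism reappears in the $E<E_m$ case of the min-max statement, where the delicate point is to ensure that near-optimal directions can be chosen strictly positive and then sent to infinity while remaining inside $\N_\ell^n$.
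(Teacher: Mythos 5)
Your proof is correct and follows essentially the same route as the paper's: both rest on the convex duality of \S\ref{sec:Leg} together with the density of rational normal directions --- you invoke it in the form $a(k)=Lf(k)$ and $L(Lf)=f$ (Corollary~\ref{cor:Kai-convex}) to compute $E_m=f(\hbar m)$ and compare with~\eqref{eq:toric-EBK}, while the paper phrases the identical computation as the membership criterion $\frac{\hbar m}{E_m}\in N \Leftrightarrow \sup_{(k,a)\in\A}\bigl\la\frac{\hbar m}{E_m},\frac{k}{a}\bigr\ra=1$ via Corollary~\ref{cor:action-EBK} and Lemma~\ref{lem:Kai-convex} and then solves for $E_m$. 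Your min-max argument (the lower bound $a(k)\geq c\ell$ on $\N_\ell^n$ for $E>E_m$, and scaling near-optimal strictly positive rational directions to force $-\infty$ for $E<E_m$) is exactly the paper's argument, with the case the paper dismisses as ``analogous'' written out in full.
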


\begin{proof}
The EBK spectrum $\sigma_\EBK$ consists of the energies $E_m$,
$m\in\N_0^n$, that are uniquely characterized by the following
equivalent conditions:
$$
\frac{\hbar m}{E_m}\in N = \overline{L(M')}
\Longleftrightarrow \sup_{q\in M}\Bigl\la\frac{\hbar m}{E_m},q\Bigr\ra = 1
\Longleftrightarrow \sup_{(k,a)\in\A}\Bigl\la\frac{\hbar m}{E_m},\frac{k}{a}\Bigr\ra = 1.
$$
Here the first characterization follows from
Proposition~\ref{prop:action-EBK} and Corollary~\ref{cor:action-EBK},
the first equivalence from Lemma~\ref{lem:Kai-convex}, 
and the second equivalence from the definition on $M$ in Proposition~\ref{prop:action-EBK}.
Solving the last condition for $E_m$ yields formula~\eqref{eq:Em}. 

To prove~\eqref{eq:Em-minmax}, note that for $k\in\N_\ell^n$ we have
$a(k)=\la p,k\ra \geq \max_jp_jk_j\geq c\ell$ with $c:=\min_{p\in N}\max_jp_j>0$.
Consider now $E>E_m$. Then for each $\ell\in\N_0$ we have
$$
  \inf_{k\in\N_\ell^n}\Bigl(E-\frac{\hbar\la m,k\ra}{a(k)}\Bigr) =
  E-\sup_{k\in\N_\ell^n}\frac{\hbar\la m,k\ra}{a(k)} = E-E_m > 0,
$$
where the last equality follows by replacing $k$ by large integer
multiples and making all its components nonzero. Combining these we get
$$
  \inf_{k\in\N_\ell^n}\Bigl(Ea(k)-\hbar\la m,k\ra\Bigr)
  = \inf_{k\in\N_\ell^n}\Bigl(a(k)\bigl(E-\frac{\hbar\la m,k\ra}{a(k)}\bigr)\Bigr)
  \geq c\ell(E-E_m),
$$
which becomes $+\infty$ in the supremum over $\ell\in\N$. This proves
the case $E>E_m$ in~\eqref{eq:Em-minmax}, and the case $E<E_m$ is analogous.
\end{proof}

\begin{ex}[Uncoupled harmonic oscillators (continued)]\label{ex:harm-cont}
In the notation of Example~\ref{ex:harm}, suppose that the frequency
vector $\om=(\om_1,\dots,\om_n)$ is {\em rational}, i.e.~$T\om\in\N_0^n$ for some $T>0$. 
Then all orbits are periodic and Proposition~\ref{prop:convex-toric}
is still applicable. Thus $\sigma_\EBK$ consists of the values
$E_m = \sup_{k\in\N_0^n}\frac{\hbar\la m,k\ra}{a(k)}$ where $k=\ell
T\om$ for some $\ell\in\N$ and $a(k)=\la p,k\ra = \ell T\la p,\om\ra =
\ell T$ for some (hence any) $p\in N$. This implies $E_m=\hbar\la
m,\om\ra$ and we recover the EBK spectrum from Example~\ref{ex:harm}.
Continuity of the spectrum now implies that this formula for $\sigma_\EBK$ 
continues to hold for {\em all} (not necessarily rational) frequency vectors,
although Proposition~\ref{prop:convex-toric} is not applicable to such
$\om$ due to the lack of rational invariant tori.
\end{ex}

So far in this section we have assumed that $f:\R^n\to\R$ is
$1$-homogeneous. If $f$ is instead $d$-homogeneous for some $d>0$,
then $\bar f=f^{1/d}$ is $1$-homogeneous with the same level set $\bar
f^{-1}(1)=f^{-1}=N$. By the (Scaling) property of the EBK spectrum, 
Corollary~\ref{cor:action-EBK} and Proposition~\ref{prop:convex-toric}
thus carry over to the $d$-homogeneous case by replacing the EBK
energies given there by their $d$-th powers. In particular, we obtain
Theorems~\ref{thm:action-EBK} and~\ref{thm:convex-toric} from the
Introduction.

\section{Billiard on the disk}\label{sec:billiard}

A well studied integrable system is the billiard on the closed disk
$D_R\subset\R^2$ of radius $R>0$; see e.g.~\cite{Weibert} and the
references therein for its classical and quantum mechanical treatment.
In this section we indicate how to derive its EBK spectrum and verify 
Proposition~\ref{prop:convex-toric} on this example. 

Without loss of generality we set the mass of the billiard ball to $1$,
so that the system is described by the free Hamiltonian
$$
  H(q,p)=\frac12|p|^2.
$$
An invariant torus $L$ in an energy hypersurface $H^{-1}(E)$ consists of
all billiard trajectories of speed $\sqrt{2E}$ with a given reflection
angle $\alpha\in(0,\pi)$ with the boundary. These trajectories are tangent to
the circle of radius $S=R|\cos\alpha|$. They are periodic if
\begin{equation}\label{eq:alpha-quant}
  \alpha = \frac{k\pi}{\ell}\qquad\text{with }k\in\Z,\,\ell\in\N,
\end{equation}
and otherwise the are dense in the annulus $A=\{S\leq|q|\leq R\}$.
The projection $(q,p)\to q$ defines a map $L\to A$ which is 2-1 over
the interior of $A$, corresponding to the two possible values of $p$ for
trajectories of $L$ through a point $q$ (cf.~\cite{Einstein}). 

Let us first assume that $\alpha\leq\pi/2$. 
The first homology of $L$ is generated by two loops: the loop
$\gamma$ running once around the inner circle given by
$$
  q(t)=R\cos\alpha e^{it},\ p(t)=i\sqrt{2E}e^{it},\qquad t\in[0,2\pi],
$$
and the loop $\delta$ running along a straight line from the inner
circle to the outer one with one lift and back with the other lift
whose first half is given by 
$$
  q(t)=t,\ p(t)=\sqrt{2E}e^{i\arcsin(R\cos\alpha/t)},\qquad t\in[S,R].
$$
Short computations with $\lambda=p\,dq$ yield the action integrals
$$
   \int_{\gamma}\lambda = 2\pi R\sqrt{2E}\cos\alpha,\qquad
   \int_{\delta}\lambda = 2R\sqrt{2E}(\sin\alpha-\alpha\cos\alpha).
$$
The EBK quantization conditions are
$$
  \int_{\gamma}\lambda=2\pi\hbar m,\quad\text{and}\quad
  \int_{\delta}\lambda=2\pi\hbar n,\qquad m,n\in\N_0,
$$
where $m,n\geq 0$ because $\cos\alpha\geq 0$ and
$\sin\alpha-\alpha\cos\alpha\geq 0$ for all $\alpha\in[0,\pi/2]$. Abbreviating
$$  
  F := R\sqrt{2E}/\hbar,
$$
these conditions become
$$
  F\cos\alpha = m\quad\text{and}\quad F(\sin\alpha-\alpha\cos\alpha)=\pi n,\qquad m,n\in\N_0
$$
and they give rise to the equation
\begin{equation}\label{eq:billiard-EBK}
  \sqrt{F^2-m^2} - m\arccos(m/F) = \pi n.
\end{equation}
An angle $\alpha\in(\pi/2,\pi)$ leads to the same quantization
condition as the angle $\pi-\alpha\in(0,\pi/2)$.  
The EBK spectrum thus consists of the values
\begin{equation}\label{eq:E-F}
  E_{m,n} = \frac{\hbar^2F_{m,n}^2}{2R^2},
\end{equation}
where $F_{m,n}$ is the unique\footnote{See~\S\ref{sec:transcendence}
for the proof of uniqueness.} solution
of~\eqref{eq:billiard-EBK} for given $m,n\in\N_0$ and each energy
value with $m>0$ appears with multiplicity $2$. 
Writing $F=kR$, this agrees with formula (4.19) in~\cite{Weibert} up
to replacing $n$ by $n+3/4$.

A short computation shows that
the periodic orbits of energy $E$ with angle $\alpha$
satisfying~\eqref{eq:alpha-quant} have action
\begin{equation}\label{eq:action-billiard}
  a(k,\ell) = 2R\sqrt{2E}\ell\sin\Bigl(\frac{\pi k}{\ell}\Bigr).
\end{equation}
We will see in the next section how the EBK spectrum can be derived
from these action values.

\section{Disk billiard as a toric domain}\label{sec:disk-toric}

In the beautiful paper~\cite{Ramos} V.~Ramos explains how the billiard
on the disk can be interpreted as a concave toric domain. For this, he
first describes the billiard on the unit disk $D_1\subset\R^2$ as the
flow on the boundary of the Lagrangian bidisk $D_1\times D_1\subset T^*\R^2=\R^4$.
Then he proves (\cite[Theorem\,3]{Ramos}) that the interior of the
Lagrangian bidisk $D_1\times D_1$ is symplectomorphic to the concave
toric domain
\begin{equation}\label{eq:XOm}
   X_\Om = \Bigl\{(z_1,z_2)\in\C^2\;\Bigl|\;
   \Bigl(\frac{\pi}{2}|z_1|^2,\frac{\pi}{2}|z_2|^2\Bigr)\in\Om\Bigr\},  
\end{equation}
where $\Om\subset[0,\pi]^2$ is the domain bounded by the coordinate
axes and the curve $N\subset[0,\pi]^2$ parametrized by\footnote{
To simplify the formulas we have replaced the angle $\alpha$
in~\cite{Ramos} by $2\alpha$ and denoted the domain $\Om_0$
in~\cite{Ramos} by $2\Om$.}
$$
  \rho(\alpha) = \Big(\sin(\alpha)-\alpha \cos (\alpha),\sin(\alpha)
  + (\pi-\alpha)\cos(\alpha)\Big),\qquad \alpha\in[0,\pi].
$$
To determine the action spectrum, we compute
\begin{eqnarray*}
  \rho'(\alpha)
&=&\Big(\alpha \sin(\alpha),(\alpha-\pi)\sin(\alpha)\Big).
\end{eqnarray*}
So the normal vector $n(p)$ at a point $p=(p_1,p_2)=\rho(\alpha)\in N$ satisfies
$$
  n(p) \sim \Big((\pi-\alpha)\sin(\alpha),\alpha \sin(\alpha)\Big)
  \sim \Big(\pi-\alpha,\alpha\Big).
$$
The correponding orbits are periodic iff $n(p)\sim(k_1,k_2)\in\N_0^2$,
which can be solved for
$$
  \alpha=\frac{k_2\pi}{k_1+k_2}\qquad\text{and}\qquad \pi-\alpha=\frac{k_1\pi}{k_1+k_2}.
$$
The corresponding action therefore becomes
\begin{eqnarray*}
  a(k_1,k_2) &=& k_1p_1+k_2p_2\\
  &=& k_1\Big(\sin(\alpha)-\alpha \cos(\alpha)\Big)+k_2
  \Big(\sin(\alpha)+(\pi-\alpha)\cos(\alpha)\Big)\\
  &=& (k_1+k_2)\sin\bigg(\frac{\pi k_2}{k_1+k_2}\bigg)\\
  &=&(k_1+k_2)\sin\bigg(\frac{\pi}{2}\bigg(1+\frac{k_2-k_1}{k_1+k_2}\bigg)\bigg)\\
  &=&(k_1+k_2)\cos\bigg(\frac{\pi(k_2-k_1)}{2(k_1+k_2}\bigg).
\end{eqnarray*}
Note that these actions agree up to a scaling factor with those for
the disk billiard given by~\eqref{eq:action-billiard} with $k=k_2$ and
$\ell=k_1+k_2$. 

The EBK spectrum can now be determined from these action values via
Proposition~\ref{prop:convex-toric}. We leave the straightforward
computation to the reader and instead determine the EBK spectrum via
Corollary~\ref{cor:action-EBK} (which we know yields the same result).
Thus $E\in\sigma_\EBK$ iff there exists $(m_1,m_2)\in\N_0^2$ such that 
$\frac{\hbar}{E}(m_1,m_2)\in N$, i.e.
$$
   m_1=\frac{E}{\hbar}(\sin\alpha-\alpha\cos\alpha),\qquad
   m_2=\frac{E}{\hbar}\bigl(\sin\alpha+(\pi-\alpha)\cos\alpha\bigr)
$$
for some $\alpha\in[0,\pi]$. Setting
$$
  F := \frac{E\pi}{\hbar},
$$
the difference of the two equations yields $m_2-m_1=F\cos\alpha$,
hence
$$
  \alpha=\arccos\bigg(\frac{m_2-m_1}{F}\bigg).
$$
Inserting this into the equation for $m_1$ gives
\begin{eqnarray*}
  \pi m_1 &=& F(\sin\alpha-\alpha\cos\alpha) \\
  &=& F\biggl(\sqrt{1-\frac{(m_2-m_1)^2}{F^2}} -
  \frac{m_2-m_1}{F}\arccos\Bigl(\frac{m_2-m_1}{F}\Bigr)\biggr) \\
  &=& \sqrt{F^2-(m_2-m_1)^2} - (m_2-m_1)\arccos\Bigl(\frac{m_2-m_1}{F}\Bigr).
\end{eqnarray*}
With $n=m_1$ and $m=m_2-m_1$ this agrees with equation~\eqref{eq:billiard-EBK}
for the EBK spectrum of the disk billiard. Note that the actual
energies $E=E_{m,n}$ of the billiard on the unit disk are obtained from
the unique solutions $F=F_{m,n}$ of~\eqref{eq:billiard-EBK} using the
relation~\eqref{eq:E-F} with $R=1$, i.e.~$E=\frac{\hbar^2F^2}{2}$, rather than
the relation $E=\frac{\hbar F}{\pi}$ above. The additional factor
$\pi$ arises because in~\eqref{eq:XOm} we have defined the toric
domain $X_\Om$ using $\frac{\pi}{2}|z_j|^2$ rather than the action
coordinates $\frac{1}{2}|z_j|^2$. The remaining discrepancy is due to
the billiard Hamiltonian $\frac{1}{2}|p|^2$ and the $1$-homogeneous
Hamiltonian $|p|$ which we need to use in Corollary~\ref{cor:action-EBK}
and Proposition~\ref{prop:convex-toric}.

\section{Transcendental numbers and Schanuel's conjecture}\label{sec:Schanuel}

In this section we recall some definitions and facts about
transcendental field extensions and Schanuel's conjecture, see
e.g.~\cite[Chapter 23]{karpfinger-meyberg} and~\cite[Chapter 21]{murty-rath}. 

Consider a field extension $K\subset L$. 
A finite subset $\{a_1,\dots,a_N\} \subset L$ is called {\em
  algebraically independent} over $K$ if there exists no nontrivial
polynomial $p \in K[x_1,\ldots x_N]$ in $N$ variables with
coefficients in $K$ such that
$$p(a_1,\ldots,a_N)=0.$$
An arbitrary subset $A \subset L$ is called algebraically independent
if each finite subset of $A$ is algebraically independent. 
A {\em transcendence basis} of $L$ over $K$ is a maximal
algebraically independent subset of $L$. Transcendence bases exist and
all have the same cardinality, which is called the \emph{transcendence
degree} of $L$ over $K$ and denoted 
$$\mathrm{trdeg}(L/K).$$
Thus the field extension is algebraic iff $\mathrm{trdeg}(L/K)=0$, and
$\{a_1,\dots,a_N\} \subset L$ is algebraically independent iff
$\mathrm{trdeg}(K(a_1,\dots,a_N)/K)=N$. Here $K(A)\subset L$ denotes
the smallest subfield containing $K$ and a subset $A\subset L$.   
A basic property of the transcendence degree is its additivity for
field extensions $K\subset L\subset M$,
\begin{equation}\label{eq:degree}
  \mathrm{trdeg}(M/K) = \mathrm{trdeg}(M/L) + \mathrm{trdeg}(L/K).
\end{equation}
We will need the following easy consequence of this formula.

\begin{lemma}\label{lem:trdeg}
Let $K\subset L$ be a field extension and $A,B\subset L$ be subsets
such that each element of $A$ is algebraic over $K(B)$ and vice
versa. Then
$$
  \mathrm{trdeg}(K(A)/K) = \mathrm{trdeg}(K(B)/K).
$$
\end{lemma}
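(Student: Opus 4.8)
The plan is to use the additivity formula~\eqref{eq:degree} applied to the chain of fields $K \subset K(B) \subset K(A\cup B)$, and symmetrically to $K \subset K(A) \subset K(A\cup B)$, and then to exploit the algebraicity hypotheses to show that the two intermediate extensions sitting at the top are both of transcendence degree zero. First I would set $M := K(A\cup B)$, the smallest subfield of $L$ containing $K$, $A$, and $B$. Applying~\eqref{eq:degree} twice gives
\begin{equation*}
  \mathrm{trdeg}(M/K(A)) + \mathrm{trdeg}(K(A)/K)
  = \mathrm{trdeg}(M/K)
  = \mathrm{trdeg}(M/K(B)) + \mathrm{trdeg}(K(B)/K).
\end{equation*}
Hence it suffices to prove that both $\mathrm{trdeg}(M/K(A))=0$ and $\mathrm{trdeg}(M/K(B))=0$, i.e.\ that the extensions $K(A)\subset M$ and $K(B)\subset M$ are algebraic; the stated equality $\mathrm{trdeg}(K(A)/K)=\mathrm{trdeg}(K(B)/K)$ then follows immediately by cancellation.

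To see that $K(B)\subset M$ is algebraic, I would argue that $M=K(A\cup B)$ is generated over $K(B)$ by the elements of $A$ (since $B$ is already in the base field $K(B)$). By hypothesis each element of $A$ is algebraic over $K(B)$. A field extension generated by algebraic elements is algebraic: this is the standard fact that the algebraic closure of $K(B)$ inside $M$ is a subfield, so it contains $K(B)$ together with all of $A$, hence contains $M$. Therefore $M/K(B)$ is algebraic and $\mathrm{trdeg}(M/K(B))=0$. By the symmetric hypothesis — each element of $B$ is algebraic over $K(A)$ — the identical argument with the roles of $A$ and $B$ exchanged shows $M/K(A)$ is algebraic and $\mathrm{trdeg}(M/K(A))=0$.

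The main (and only genuine) obstacle is the subtlety that $A$ and $B$ may be infinite subsets, whereas the notion of algebraic independence and the additivity formula~\eqref{eq:degree} are phrased via finite subsets and transcendence bases. I would handle this by noting that algebraicity is a finitary property: any single element of $A$ algebraic over $K(B)$ depends only on finitely many elements of $B$, and the transcendence degree is well-defined for arbitrary extensions via maximal algebraically independent subsets, so~\eqref{eq:degree} holds without finiteness assumptions. With this understood, the claim ``an extension generated by algebraic elements is algebraic'' remains valid verbatim for infinite generating sets, and the cancellation argument goes through. Thus the entire proof reduces to the two symmetric applications of~\eqref{eq:degree} together with this elementary closure-under-algebraicity fact.
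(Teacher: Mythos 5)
Your proof is correct and follows essentially the same route as the paper: both apply the additivity formula~\eqref{eq:degree} to the two towers $K\subset K(A)\subset K(A\cup B)$ and $K\subset K(B)\subset K(A\cup B)$ and observe that the top extensions are algebraic, hence of transcendence degree zero. You merely spell out in more detail the standard fact that an extension generated by algebraic elements is algebraic, which the paper takes for granted.
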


\begin{proof}
Apply~\eqref{eq:degree} to the field extensions $K\subset K(A)\subset
K(A\cup B)$ and $K\subset K(B)\subset K(A\cup B)$, noting that
$K(A)\subset K(A\cup B)$ and $K(B)\subset K(A\cup B)$ are algebraic
and thus have transcendence degree zero. 
\end{proof}

From now on we will specialize to field extensions $K=\Q\subset
L\subset \C$ and drop the specification ``over $\Q$'',
writing $\mathrm{trdeg}(L)$ for $\mathrm{trdeg}(L/\Q)$ etc. We will need
the following special case of the Lindemann--Weierstrass Theorem (see
e.g.~\cite[Corollary 4.2]{murty-rath}).

\begin{thm}\label{thm:Lindemann-Weierstrass}
If $0\neq \alpha\in\C$ is algebraic, then $e^a$ is transcendental. 
\end{thm}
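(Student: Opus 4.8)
The plan is to establish this special case of the Lindemann--Weierstrass theorem (reading the conclusion $e^a$ as $e^\alpha$, the exponent being the hypothesized algebraic number) by Hermite's method of auxiliary integrals, arguing by contradiction from the assumption that $e^\alpha$ is algebraic. First I would reduce the statement to the nonexistence of an integer exponential relation. Suppose $e^\alpha$ is algebraic. Since $\alpha\neq 0$ is algebraic and $e^\alpha\neq 0$, the number $e^\alpha$ is a root of a polynomial with integer coefficients and nonzero constant term (any vanishing constant term can be removed by dividing by the nonzero $e^\alpha$), which yields a relation
$$
  \sum_{k=0}^n c_k e^{k\alpha} = 0,\qquad c_k\in\Z,\ c_0\neq 0,
$$
in which the exponents $0,\alpha,2\alpha,\dots,n\alpha$ are pairwise distinct algebraic numbers. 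The goal becomes to show that no such relation can hold.

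Next I would introduce Hermite's analytic device. For a polynomial $f$ of degree $N$ one has, by repeated integration by parts, the exact identity
$$
  I(s) := \int_0^s e^{s-x} f(x)\,dx = e^s\sum_{j=0}^N f^{(j)}(0) - \sum_{j=0}^N f^{(j)}(s),
$$
where for complex $s=k\alpha$ the integral is taken along the segment from $0$ to $k\alpha$. For a large prime $p$ I would choose
$$
  f(x) = \frac{x^{p-1}\prod_{k=1}^n (x-k\alpha)^p}{(p-1)!},
$$
multiplied by a suitable power of a common denominator $\ell$ clearing the algebraic numbers $k\alpha$ to algebraic integers, and then form the quantity $J := \sum_{k=0}^n c_k\,I(k\alpha)$.

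The arithmetic heart of the argument comes next. Because $\sum_k c_k e^{k\alpha}=0$, the terms carrying the factor $e^s$ cancel in $J$, leaving $J=-\sum_{k,j} c_k f^{(j)}(k\alpha)$. The construction of $f$ forces every $f^{(j)}(k\alpha)$ with $k\geq 1$ to vanish to order at least $p$, hence to be divisible by $p$ as an algebraic integer, while at the exponent $0$ the only surviving contribution is $f^{(p-1)}(0)$, which is \emph{not} divisible by $p$ for large $p$. Passing to the product of $J$ over the Galois conjugates of $\alpha$ --- equivalently, expressing the symmetric functions of the numbers $k\alpha^{(i)}$ through the coefficients of the minimal polynomial of $\alpha$ --- produces a \emph{rational} integer $\tilde J$ which for all sufficiently large $p$ is not divisible by $p$, so that $|\tilde J|\geq 1$. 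Carrying out this symmetrization rigorously, so that $\tilde J\in\Z\setminus\{0\}$ while controlling the common denominator $\ell$, the integrality of the relevant derivatives, and the precise residue of the surviving term $f^{(p-1)}(0)$ modulo $p$, is the step I expect to be the main obstacle.

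Finally I would close with a size estimate against this lower bound. Estimating the integral directly gives $|I(k\alpha)|\leq |k\alpha|\,e^{|k\alpha|}\max_{x\in[0,k\alpha]}|f(x)|$, and since $\max|f|$ grows only like $C^p/(p-1)!$ for a constant $C$ independent of $p$, one obtains $|J|\leq C'^{\,p}/(p-1)!$, and the same bound up to constants for $|\tilde J|$. As $p\to\infty$ the right-hand side tends to $0$, contradicting $|\tilde J|\geq 1$ for all large $p$. This contradiction rules out the relation $\sum_k c_k e^{k\alpha}=0$, and hence shows that $e^\alpha$ is transcendental.
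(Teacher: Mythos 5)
The paper does not prove this theorem: it is quoted without proof as a special case of the Lindemann--Weierstrass theorem, with a pointer to \cite[Corollary 4.2]{murty-rath} (note also that ``$e^a$'' in the statement is a typo for $e^{\alpha}$, which you read correctly). So there is no internal proof to compare against; what you propose is the classical Hermite--Lindemann argument, the same family of proof as in the cited reference, except that you establish the special case directly --- reducing the assumption that $e^{\alpha}$ is algebraic to a relation $\sum_{k=0}^n c_k e^{k\alpha}=0$ with $c_k\in\Z$, $c_0\neq 0$, and then taking the norm of the auxiliary quantity $J$ over the conjugates of $\alpha$ --- rather than invoking the full Lindemann--Weierstrass theorem with its double symmetrization over conjugates of both coefficients and exponents. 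Your reduction (clearing the constant term by dividing by $e^{\alpha}$), Hermite's identity, the choice of $f(x)=x^{p-1}\prod_{k=1}^n(x-k\alpha)^p/(p-1)!$ with denominators cleared, and the divisibility analysis (all $f^{(j)}(k\alpha)$ with $k\geq 1$ divisible by $p$, the surviving term $c_0f^{(p-1)}(0)$ not divisible by $p$ for large $p$) are the standard, correct steps, and the argument can be completed along exactly these lines.

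One warning at the step you yourself flagged as the main obstacle. After passing to the norm $\tilde J=\prod_i J^{(i)}$ over the embeddings $\alpha\mapsto\alpha^{(i)}$, the factorial-decay bound $|J|\leq C^p/(p-1)!$ is available \emph{only} for the embedding corresponding to the true $\alpha$: for $i\neq 1$ the relation $\sum_k c_k e^{k\alpha^{(i)}}=0$ generally fails, since field embeddings do not commute with the exponential function, so the $e^s$-terms in Hermite's identity do not cancel in $J^{(i)}$ and no factorial savings occur. Those conjugate factors can only be bounded crudely, by $C^p$ say, from the explicit derivative sums. This still closes the proof, because a single factor of $1/(p-1)!$ beats any geometric growth, giving $|\tilde J|\leq (C')^p/(p-1)!\to 0$ against $|\tilde J|\geq 1$; but your phrase ``the same bound up to constants for $|\tilde J|$'' is not literally correct and must be adjusted in this way. (The alternative textbook route, as in \cite{murty-rath}, symmetrizes the exponential relation itself by expanding $\prod_i\bigl(\sum_k c_k e^{k\alpha^{(i)}}\bigr)=0$, which vanishes because the factor for the true $\alpha$ does; that trades your issue for the separate task of showing a nonzero integer coefficient survives the expansion.)
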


This corresponds to the case $N=1$ of the following conjecture.

\textbf{Schanuel's Conjecture. }
\emph{If $\alpha_1,\ldots \alpha_N\in\C$ are linearly independent over
$\mathbb{Q}$, then} 
$$
  \mathrm{trdeg}\,\mathbb{Q}\big(\alpha_1,\ldots \alpha_N, e^{\alpha_1},\ldots,
  e^{\alpha_N}\big) \geq N.
$$
We quote from the book of Ram Murty and Rath~\cite[Chapter 21]{murty-rath}:
\begin{quote}
``This conjecture is believed to include all known transcendence results as well
as all reasonable transcendence conjectures on the values of the exponential function."
\end{quote}

\section{Transcendence of the EBK spectrum of the disk billiard}\label{sec:transcendence}

We fix a positive rational number $n \in \mathbb{Q} \cap (0,\infty)$. We denote by
$$\arccos \colon [-1,1] \to [0,\pi]$$
the inverse function of $\cos|_{[0,\pi]}$. 
For $m \in \mathbb{N}_0$ we consider the real numbers
$$F_m:=F_{m,n} \in \mathbb{R}$$ 
defined implicitly by equation~\eqref{eq:billiard-EBK}, 
\begin{equation}\label{edef}
  \sqrt{F^2_m-m^2}-m\arccos\bigg(\frac{m}{F_m}\bigg)=n\pi.
\end{equation}
That this equation has a unique solution can be seen as follows. For
$m \geq 0$ consider the function
$$\mathfrak{f}_m \colon [m,\infty)\to\R, \qquad x \,\mapsto
\sqrt{x^2-m^2}- m \arccos\bigg(\frac{m}{x}\bigg).$$
Its derivative is given by
$$\mathfrak{f}_m'(x)=\frac{x}{\sqrt{x^2-m^2}}- \frac{m^2}{x^2\sqrt{1-\frac{^2}{x^2}}}=\frac{x^2-m^2}{x\sqrt{x^2-m^2}}=\frac{\sqrt{x^2-m^2}}{x},$$
so $\mathfrak{f}_m$ is strictly increasing. Moreover, it satisfies
$$\mathfrak{f}_m\big(m)=0, \qquad \lim_{x \to \infty} \mathfrak{f}_m(x)=\infty$$
and thus gives rise to a bijection
$$\mathfrak{f}_m \colon [m,\infty) \to [0,\infty).$$
Therefore, we can set
$$F_m=\mathfrak{f}_m^{-1}(n).$$
From the above discussion we infer that
\begin{equation}\label{fineq}
F_m>m.
\end{equation}
Note that 
\begin{equation}\label{f0}
F_0=n\pi
\end{equation}
is a transcendental number, since $\pi$ is transcendental and $n$ is rational. 
More generally, we have

\begin{prop}\label{prop:transcendental}
For every $m \in \mathbb{N}_0$ the number $F_m$ is transcendental. 
\end{prop}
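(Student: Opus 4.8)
The plan is to argue by contradiction using the Lindemann--Weierstrass theorem (Theorem~\ref{thm:Lindemann-Weierstrass}), after rewriting the implicit equation~\eqref{edef} in a form where the transcendental number $\pi$ enters only through a root of unity. The case $m=0$ is already settled by~\eqref{f0}, so I would assume $m\geq 1$ and suppose, for contradiction, that $F_m$ is algebraic.

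First I would reintroduce the reflection angle $\alpha:=\arccos(m/F_m)$, so that $\cos\alpha=m/F_m$; since $F_m>m$ by~\eqref{fineq} forces $m/F_m\in(0,1)$ and hence $\alpha\in(0,\pi/2)$, we also have $\sin\alpha=\sqrt{1-m^2/F_m^2}>0$. If $F_m$ were algebraic, then $\cos\alpha$ and $\sin\alpha$ would both be algebraic, and therefore so would $e^{i\alpha}=\cos\alpha+i\sin\alpha$. In these terms $\sqrt{F_m^2-m^2}=F_m\sin\alpha$, so equation~\eqref{edef} reads $F_m\sin\alpha-m\alpha=n\pi$, i.e.
$$
  m\alpha=F_m\sin\alpha-n\pi .
$$

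The key step is to exponentiate this identity. Applying $t\mapsto e^{it}$ yields
$$
  \bigl(e^{i\alpha}\bigr)^m=e^{\,iF_m\sin\alpha}\,e^{-in\pi}.
$$
Here $\bigl(e^{i\alpha}\bigr)^m$ is algebraic as a power of the algebraic number $e^{i\alpha}$, and $e^{-in\pi}$ is a root of unity, hence algebraic and nonzero, precisely because $n\in\Q$. Solving, $e^{\,iF_m\sin\alpha}=\bigl(e^{i\alpha}\bigr)^m e^{in\pi}$ is algebraic. But $iF_m\sin\alpha$ is a nonzero algebraic number, being $i$ times the product of the nonzero algebraic numbers $F_m$ and $\sin\alpha$, so Theorem~\ref{thm:Lindemann-Weierstrass} forces $e^{\,iF_m\sin\alpha}$ to be transcendental. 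This contradiction shows that $F_m$ is transcendental.

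The substantive point---and the only place where the hypothesis $n\in\Q$ is used---is the observation that $\pi$, though transcendental, contributes to the exponentiated equation only through the algebraic factor $e^{-in\pi}$. I expect the main obstacle to be arranging the algebra so that the transcendental angle $\alpha$ itself is eliminated and the transcendence contradiction lands on the single clean quantity $e^{\,iF_m\sin\alpha}$; everything else is routine bookkeeping of which numbers are algebraic, together with the elementary range check guaranteeing $F_m\neq 0$ and $\sin\alpha\neq 0$.
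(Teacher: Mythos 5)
Your proof is correct and is essentially the paper's own argument: both handle $m=0$ via $F_0=n\pi$, assume $F_m$ algebraic for $m\geq 1$, exploit that $n\in\Q$ makes $e^{-in\pi}$ (resp.\ $e^{in\pi/m}$ in the paper) algebraic, and derive the contradiction from Lindemann--Weierstrass applied to the nonzero algebraic number $i\sqrt{F_m^2-m^2}$ (the paper uses $i\sqrt{F_m^2/m^2-1}$, the same up to the factor $m$). Your packaging---exponentiating $m\alpha=F_m\sin\alpha-n\pi$ so that every step is a one-directional ``product/power of algebraics is algebraic''---is marginally cleaner, since the paper instead passes through the claim that $\frac{1}{2}\bigl(z/\zeta+\zeta/z\bigr)$ is transcendental when $z$ is transcendental and $\zeta$ algebraic, a small quadratic-equation argument it leaves implicit.
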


\begin{proof}
In view of (\ref{f0}) we can assume without loss of generality
that $m \geq 1$. We argue by contradiction and assume that $F_m$ is algebraic. 
From (\ref{edef}) we obtain
\begin{equation}\label{feq}
\frac{m}{F_m}=\cos\bigg(\sqrt{\frac{F_m^2}{m^2}-1}-\frac{n\pi}{m}\bigg).
\end{equation}
Since $F_m>m$ by (\ref{fineq}), the assumption that $F_m$ is algebraic implies that
$i\sqrt{\tfrac{F_m^2}{m^2}-1}$ is an algebraic number different from zero. 
From Theorem~\ref{thm:Lindemann-Weierstrass} we conclude that $e^{i\sqrt{\frac{F_m^2}{m^2}-1}}$
is transcendental. Since $n$ is rational and $m$ is an integer, this
implies that 
$$\cos\bigg(\sqrt{\frac{F_m^2}{m^2}-1}-\frac{n\pi}{m}\bigg)
=\frac{1}{2}\Bigg(\frac{e^{i\sqrt{\tfrac{F_m^2}{m^2}-1}}}{
e^{i\frac{n\pi}{m}}}+\frac{e^{i\frac{n\pi}{m}}}{e^{i\sqrt{\tfrac{F_m^2}{m^2}-1}}}\Bigg)$$
is transcendental as well. But then by~\eqref{feq} the number $F_m$ is
transcendental, contradicting our assumption.
\end{proof}

If we assume Schanuel's conjecture we can say much more about these numbers. 

\begin{prop}\label{prop:schanuel}
Under the assumption that Schanuel's conjecture is true it follows that the  set
$\{F_m\mid m \in \mathbb{N}_0\}$ is algebraically independent.
\end{prop}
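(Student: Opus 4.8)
The plan is to deduce algebraic independence of the infinite set $\{F_m\mid m\in\N_0\}$ from that of each finite subset, and to import the transcendence from Schanuel's conjecture through the exponential relation~\eqref{feq}. Since an infinite set is algebraically independent iff every finite subset is, I fix distinct indices $m_1,\dots,m_N\in\N_0$ and aim to prove $\mathrm{trdeg}\,\Q(F_{m_1},\dots,F_{m_N})=N$.

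First I would isolate the exponentials that carry the transcendence. For $m\ge 1$ set $\gamma_m:=\sqrt{F_m^2/m^2-1}>0$, so that~\eqref{feq} reads $m/F_m=\cos(\gamma_m-n\pi/m)$. Writing $w:=e^{i\gamma_m}$ and $\zeta_m:=e^{in\pi/m}$, this becomes the quadratic relation $w^2-\tfrac{2m\zeta_m}{F_m}w+\zeta_m^2=0$. The crucial point is that, because $n$ is rational and $m$ is an integer, $\zeta_m$ is a root of unity and hence algebraic. Consequently $e^{i\gamma_m}$ is algebraic over $\Q(F_m)$, while solving the same quadratic for $F_m=\tfrac{2m\zeta_m\,e^{i\gamma_m}}{e^{2i\gamma_m}+\zeta_m^2}$ shows that $F_m$ is algebraic over $\Q(e^{i\gamma_m})$. (If the index $0$ occurs, $F_0=n\pi$ from~\eqref{f0} is algebraic over $\Q(\pi)$ and conversely, so I simply adjoin $\pi$.) Applying Lemma~\ref{lem:trdeg} to $A=\{F_{m_1},\dots,F_{m_N}\}$ and $B=\{e^{i\gamma_{m_j}}\}\cup\{\pi\}$ then reduces the claim to showing that the $e^{i\gamma_{m_j}}$ (and $\pi$) are algebraically independent.

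The lower bound on this transcendence degree is exactly what Schanuel's conjecture provides. Applied to the purely imaginary numbers $i\gamma_{m_1},\dots,i\gamma_{m_N}$ together with $i\pi$ (using $e^{i\pi}=-1$), it gives $\mathrm{trdeg}\,\Q(i\pi,i\gamma_{m_1},\dots,i\gamma_{m_N},e^{i\gamma_{m_1}},\dots,e^{i\gamma_{m_N}})\ge N+1$, provided the exponents are $\Q$-linearly independent. Since each $\gamma_{m_j}$ is algebraic over $\Q(F_{m_j})$ and hence over $\Q(e^{i\gamma_{m_j}})$, the $\gamma$'s contribute no transcendence, so this forces $\mathrm{trdeg}\,\Q(\pi,e^{i\gamma_{m_1}},\dots,e^{i\gamma_{m_N}})\ge N+1$; as there are only $N+1$ generators the degree equals $N+1$, so the $e^{i\gamma_{m_j}}$ and $\pi$ are algebraically independent and the reduction of the previous paragraph concludes the argument.

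The main obstacle is the hypothesis of Schanuel's conjecture: I must verify that $i\pi,i\gamma_{m_1},\dots,i\gamma_{m_N}$ are $\Q$-linearly independent, equivalently that no nontrivial rational relation $\sum_j c_j\gamma_{m_j}=c_0\pi$ holds. This cannot follow from the algebraic independence we are trying to prove—if $\gamma_{m_k}$ were a rational combination of the others plus a multiple of $\pi$, then $e^{i\gamma_{m_k}}$, and hence $F_{m_k}$, would be algebraic over the field generated by the remaining numbers—so it needs a separate and more elementary input. My plan is to rewrite a putative relation using $\gamma_{m_j}=\arctan(\gamma_{m_j})+n\pi/m_j$ from~\eqref{edef}: it collapses to $\sum_j c_j\arctan(\gamma_{m_j})\in\pi\Q$, and exponentiating via $e^{2i\arctan\gamma}=\tfrac{1+i\gamma}{1-i\gamma}$ turns this into a root-of-unity identity $\prod_j\bigl(\tfrac{1+i\gamma_{m_j}}{1-i\gamma_{m_j}}\bigr)^{c_j}=\omega$. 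I expect the contradiction to come from the distinct resonance scales $n\pi/m_j$ governing the $\gamma_{m_j}$, combined with the transcendence of the individual numbers from Proposition~\ref{prop:transcendental}, handled by an induction on $N$ that peels off the relation one index at a time; establishing this linear independence rigorously is the crux of the proof.
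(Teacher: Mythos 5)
Your overall skeleton is exactly the paper's: pass from $F_m$ to the exponent $\gamma_m=\sqrt{F_m^2/m^2-1}$ (the paper's $G_m$), observe via \eqref{feq} that $e^{i\gamma_m}$ and $F_m$ are algebraic over each other because $e^{in\pi/m}$ is a root of unity, and then combine Lemma~\ref{lem:trdeg} with Schanuel's conjecture applied to the purely imaginary exponents. But the proposal has a genuine gap at the very point you yourself call the crux: the $\Q$-linear independence of $i\pi,i\gamma_{m_1},\dots,i\gamma_{m_N}$, without which Schanuel's conjecture cannot be invoked, is never proved. The substitute you sketch --- rewriting a putative relation as a root-of-unity identity $\prod_j\bigl(\tfrac{1+i\gamma_{m_j}}{1-i\gamma_{m_j}}\bigr)^{c_j}=\omega$ and hoping for a contradiction from ``distinct resonance scales'' together with Proposition~\ref{prop:transcendental} --- is not an argument and is too weak in principle: each factor $\tfrac{1+i\gamma_{m_j}}{1-i\gamma_{m_j}}$ is algebraic over $\Q(F_{m_j})$, so such an identity is a \emph{joint} algebraic relation among the $F_{m_j}$, and transcendence of each $F_m$ individually can never exclude joint relations (compare $\pi$ and $2\pi$).

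Moreover, your assertion that the linear independence ``cannot follow from the algebraic independence we are trying to prove'' and therefore needs a separate, more elementary input is precisely where you part ways with the paper, and it misses the idea that closes the proof. The paper proceeds by induction on $N$: when the new exponent $G_N$ is treated, the algebraic independence of $\mathfrak{F}_{N-1}=\{F_0,\dots,F_{N-1}\}$ is already available as the \emph{induction hypothesis}; hence $G_0,\dots,G_{N-1}$ are algebraically, in particular linearly, independent, so any rational relation must take the form $G_N=\sum_{m<N}r_mG_m$. Substituting this into \eqref{feq} for $m=N$ and expanding the exponentials via \eqref{eeq} expresses $2N/F_N=2\bigl/\sqrt{\bigl(\sum_{m<N}r_mG_m\bigr)^2+1}$ through roots of unity and rational powers of the quantities $\tfrac{m}{F_m}+i\sqrt{1-\tfrac{m^2}{F_m^2}}$, all algebraic over $\Q(\mathfrak{F}_{N-1})$; the paper reads the resulting identity as an algebraic dependence among $F_0,\dots,F_{N-1}$, contradicting the induction hypothesis. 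Your closing phrase about ``an induction on $N$ that peels off the relation one index at a time'' gestures at this, but the induction hypothesis you need is the algebraic independence statement itself, not an elementary substitute, and carrying out that step is exactly what is missing. (A smaller, fixable slip: when $0\notin\{m_1,\dots,m_N\}$ your application of Lemma~\ref{lem:trdeg} with $B=\{e^{i\gamma_{m_j}}\}\cup\{\pi\}$ is illegitimate, since $\pi$ is not known to be algebraic over $\Q(F_{m_1},\dots,F_{m_N})$; one should instead conclude that $\{\pi\}\cup\{F_{m_1},\dots,F_{m_N}\}$ has transcendence degree $N+1$, hence is algebraically independent, and then discard $\pi$.)
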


\begin{proof}
Under the assumption that Schanuel's conjecture holds true we will
show by induction that for every $N \in \mathbb{N}_0$ the set 
$$\mathfrak{F}_N:=\{F_m \mid 0 \leq m \leq N\}$$
is algebraically independent. For $N=0$ the set $\mathfrak{F}_0$ consists of
the single element $F_0=n\pi$ which is transcendental, and therefore
$\mathfrak{F}_0$ is algebraically independent. It remains to carry out
the induction step. For this purpose, we assume that
$\mathfrak{F}_{N-1}$ is algebraically independent 
and we want to conclude that $\mathfrak{F}_N$ is algebraically
independent. The strategy to prove this is the following. We abbreviate
$$
  G_m := \sqrt{\frac{F_m^2}{m^2}-1}
$$
and consider the set 
$$
  \mathfrak{G}_N := \{G_m\ \Big|\ 0 \leq m \leq N\}.
$$
We will first use the induction hypothesis to show that the set
$\mathfrak{G}_N$ is linearly independent over $\mathbb{Q}$, and then
conclude the induction step from Schanuel's conjecture in view of the
defining equation (\ref{edef}).  

Let us first show that $\mathfrak{G}_N$ is linearly independent over $\mathbb{Q}$.
We argue by contradiction and assume that this is not the case. By
induction hypothesis, $\mathfrak{F}_{N-1}$ is algebraically
independent. Since $G_m$ is an algebraic function of $F_m$ and vice
versa, it follows from Lemma~\ref{lem:trdeg} that the set
$\mathfrak{G}_{N-1}$ is algebraically independent as well. 
In particular, $\mathfrak{G}_{N-1}$ is linearly independent over
$\mathbb{Q}$. Hence, if $\mathfrak{G}_N$ is not linearly independent
over $\mathbb{Q}$, then there exist rational numbers $r_0,\ldots
r_{N-1} \in \mathbb{Q}$ such that 
\begin{equation}\label{eq:lindep}
  G_N=\sum_{m=0}^{N-1} r_mG_m.
\end{equation}
Since $\frac{m}{F_m} \in (0,1)$, 
we infer from (\ref{feq}) that
$$
  G_m-\frac{n\pi}{m} \in \Big(0,\frac{\pi}{2}\Big)
$$
and therefore
$$
  \sin\Big(G_m-\frac{n\pi}{m}\Big) \in (0,1).
$$
Hence we obtain from (\ref{feq}) the formula
\begin{equation}\label{eeq}
  e^{iG_m-\frac{in\pi}{m}} = \frac{m}{F_m}+i\sqrt{1-\frac{m^2}{F_m^2}}.
\end{equation}
From this we infer
\begin{eqnarray*}
  \frac{2}{\sqrt{\bigg(\sum_{m=0}^{N-1}r_m \sqrt{\frac{F_m^2}{m^2}-1}\bigg)^2
    +1}}
  &=& \frac{2N}{F_N}\\
  &=& 2\cos\Big(G_N-\frac{n\pi}{N}\Big)\\
  &=& \frac{e^{\frac{in\pi}{N}}}{e^{i\sum_{m=0}^{N-1}r_mG_m}}+\frac{e^{i\sum_{m=0}^{N-1}r_mG_m}}
{e^{\frac{in\pi}{N}}}\\
&=&\frac{e^{in\pi\big(\frac{1}{N}-\sum_{m=0}^{N-1}\frac{r_m}{m}\big)}}{
\prod_{m=0}^{N-1}\bigg(\frac{m}{F_m}+i\sqrt{1-\frac{m^2}{F_m^2}}
\bigg)^{r_m}}\\
& &+\frac{\prod_{m=0}^{N-1}\bigg(\frac{m}{F_m}+i\sqrt{1-\frac{m^2}{F_m^2}}
\bigg)^{r_m}}{e^{in\pi\big(\frac{1}{N}-\sum_{m=0}^{N-1}\frac{r_m}{m}\big)}}
\end{eqnarray*}
Here the first equality follows from~\eqref{eq:lindep} and the
definition of $G_m$,
the second one from~\eqref{feq} for $m=N$,
the third one from Euler's formular and~\eqref{eq:lindep}, 
and the fourth one from~\eqref{eeq}. 
This shows that the set $\mathfrak{F}_{N-1}$ is algebraically
dependent, in contradiction to the induction hypothesis. Hence the set
$\mathfrak{G}_N$ is linearly independent over $\mathbb{Q}$.

It follows that the set
$$
  i\mathfrak{G}_N = \{iG_m\mid 0 \leq m \leq N\}
$$
is linearly independent over $\mathbb{Q}$ as well.
By~\eqref{eeq}, for each $0\leq m\leq N$ the number $e^{iG_m}$ is
algebraic over $\Q(\mathfrak{F}_N)$ and thus over $\Q(i\mathfrak{G}_N)$.
Therefore, from Lemma~\ref{lem:trdeg} we conclude that
$$
  \mathrm{trdeg}\,\mathbb{Q}\big(iG_0,\ldots iG_N,e^{iG_0},\ldots
  e^{iG_N}\big) = \mathrm{trdeg}\,\mathbb{Q}\big(iG_0,\ldots,iG_N\big).
$$
Since we assume that Schanuel's conjecture holds true, this implies that
$$
  \mathrm{trdeg}\,\mathbb{Q}\big(iG_0,\ldots,iG_N\big) \geq N+1.
$$
Hence the set $i\mathfrak{G}_N$, and therefore also the set
$\mathfrak{F}_N$, is algebraically independent. This proves the proposition. 
\end{proof}

Propositions~\ref{prop:transcendental} and~\ref{prop:schanuel}
together give Theorem~\ref{thm:schanuel} from the Introduction.

\begin{rem}
It is interesting to observe that the action
values~\eqref{eq:action-billiard} of the disk billiard are all
algebraic multiples of $R\sqrt{E}$, and the determine via
Proposition~\ref{prop:convex-toric} the EBK spectral values that are
algebraically independent (assuming Schanuel's conjecture). This is
possible because the formulas in Proposition~\ref{prop:convex-toric}
involve a limiting process (sup or inf). 
\end{rem}

\begin{rem}
The quantum mechanical spectrum of the disk billiard is given by
zeroes of higher order Bessel functions~\cite{Weibert}. In view of
Proposition~\ref{prop:schanuel}, one may wonder whether these numbers
are also algebraically independent.   
\end{rem}

\section{Maslov shifts}\label{sec:Maslov}

In this section we discuss the effect of Maslov shifts on the various
spectra. 
For this, we replace the quantization condition~\eqref{eq:quant} on a
torus $L$ by
\begin{equation*}
  \int_{\gamma_j}\lambda\in 2\pi\hbar(\Z+\mu_j)\quad \text{for }j=1,\dots,n.
\end{equation*}
Here $\gamma_1,\dots,\gamma_n$ is a basis of $H_1(L)$ and we are
fixing a vector (representing $1/4$ times the Maslov shifts)
$$
  \mu=(\mu_1,\dots,\mu_n)\in\R^n.
$$
The shifted EBK spectrum is defined as before, using this quantization
condition. For a toric Hamiltonian as in~\S\ref{ss:toric} the
quantization condition becomes $p_j=\hbar(m_j+\mu_j)$ and the EBK
spectrum in~\eqref{eq:toric-EBK} becomes 
\begin{equation*}
  \sigma_\EBK^\mu = \{f(\hbar(m+\mu))\mid m\in\N_0^n\}.
\end{equation*}
The shifted EKB spectrum of the uncoupled harmonic oscillators in
Example~\ref{ex:harm} becomes
$$
  \sigma_\EBK^\mu = \Bigl\{\sum_{j=1}^n\hbar\om_j (m_j+\mu_j) \mid m\in\N_0^n\Bigr\}.
$$
which for $\mu_j=1/2$ agrees with the quantum mechanical spectrum. 

We shift the marked action spectrum in~\S\ref{ss:toric} to
$$
  \A^\mu = \{(k,\la p+\mu,k\ra) \mid k\in\Z^n,\,p\in N,\,n(p)\sim k\}
$$
and use this to define
$$
  M_\mu := \overline{\Bigl\{\frac{k}{a}\;\Bigl|\;(k,a)\in\A^\mu,\,a\neq 0\Bigr\}}.
$$
as in Proposition\ref{prop:action-EBK}. Since
\begin{align*}
  \Bigl\{\frac{k}{a}\;\Bigl|\;(k,a)\in\A^\mu\Bigr\}
  &= \Bigl\{\frac{k}{\la p+\mu,k\ra}\;\Bigl|\;k\in\Z^n,\,p\in N,\,n(p)\sim k\Bigr\}\cr
  &= \Bigl\{\frac{k}{\la p,k\ra}\;\Bigl|\;k\in\Z^n,\,p\in N+\mu,\,n(p)\sim k\Bigr\},
\end{align*}
the formula for recovering $N$ from the marked action spectrum in
Proposition\ref{prop:action-EBK} becomes
$$
  N+\mu = \overline{L(M_\mu')}.
$$   
Corollary~\ref{cor:action-EBK} remains true with the shifted EBK spectrum
$$
  \sigma_\EBK^\mu = \Bigl\{E\in\R\;\Bigl|\; \text{there exists }m\in \N_0^n\text{ such
    that }\frac{\hbar(m+\mu)}{E}\in N\Bigr\}.
$$
Formula~\eqref{eq:Em} in Proposition~\ref{prop:convex-toric} becomes
\begin{equation*}
  E_m^\mu = \sup_{(k,a)\in\A}\frac{\hbar\la m+\mu,k\ra}{a}
  = \sup_{k\in\N_0^n}\frac{\hbar\la m+\mu,k\ra}{a(k)},\qquad m\in\N_0^n.
\end{equation*}
For the disk billiard in~\S\ref{sec:billiard} we choose the shifts $0$
and $3/4$ for the loops $\gamma$ and $\delta$, respectively, so the
EBK quantization conditions become 
$$
  \int_{\gamma}\lambda=2\pi\hbar m,\quad\text{and}\quad
  \int_{\delta}\lambda=2\pi\hbar \Bigl(n+\frac34\Bigr),\qquad m,n\in\N_0.
$$
Then equation~\eqref{eq:billiard-EBK} gets replaced by
\begin{equation*}
  \sqrt{F^2-m^2} - m\arccos(m/F) = \pi(n+3/4),
\end{equation*}
which for $F=kR$ agrees with formula (4.19) in~\cite{Weibert}. This
formula is reproduced by the approach in~\S\ref{sec:disk-toric}
with the shifts $\mu_1=\mu_2=3/4$, i.e.~replacing $m_j$ by $m_j+3/4$
for $j=1,2$.


 


\end{document}